\newtheorem{thm}{Theorem}
\newtheorem{cor}{Corollary}
\newtheorem{lem}{Lemma}
\newtheorem{defn}{Definition}
\def\beq{\begin{equation}}
\def\eeq{\end{equation}}
\def\com#1{\quad{\textrm{#1}}\quad}
\def\eq#1{(\ref{#1})}
\def\nn{\nonumber}
\def\x#1{$#1\times#1$}
\def\barx{\overline{x}}
\def\tilx{\tilde{x}}
\def\tilt{\tilde{t}}
\def\bp{\backprime}
\def\RC{$R/C$ }
\def\dbyd#1{\textstyle{\frac{\partial}{\partial#1}}}
\begin{document}

\title{Smooth solutions and singularity formation for the
  inhomogeneous nonlinear wave equation\thanks{Parts of this paper are
    included in this author's Ph.D dissertation at the University of
    Massachusetts, Amherst.}}

\author{Geng Chen\thanks{Department of Mathematics, Pennsylvania State
    University, University Park, PA, 16802 ({\tt chen@math.psu.edu}).}
  \and Robin Young\thanks{Department of Mathematics and Statistics,
    University of Massachusetts, Amherst, MA 01003 ({\tt
      young@math.umass.edu}).  Supported in part by NSF Applied
    Mathematics Grant Number DMS-0908190.}}

\pagestyle{myheadings}
\thispagestyle{plain}
\markboth{GENG CHEN and ROBIN YOUNG}{SINGULARITY FORMATION
FOR THE WAVE EQUATION}

\date{}

\maketitle

\begin{abstract}
  We study the nonlinear inhomogeneous wave equation in one space
  dimension: $v_{tt} - T(v,x)_{xx} = 0$. By constructing some
  ``decoupled" Riccati type equations for smooth solutions, we provide
  a singularity formation result without restrictions on the total
  variation of unknown, which generalize earlier singularity results
  of Lax and the first author. These results are applied to several
  one-dimensional hyperbolic models, such as compressible Euler flows
  with a general pressure law, elasticity in an inhomogeneous medium,
  transverse MHD flow, and compressible flow in a variable area duct.
\end{abstract}

\begin{keywords}
  Wave equation, conservation laws, shock formation, nonlinear
  elasticity, compressible Euler equations, MHD.
\end{keywords}

\begin{AMS}
  35L65, 35B65, 35B35.
\end{AMS}

\section{Introduction}

In this paper, we consider the initial value problems for the
second-order quasilinear inhomogeneous wave equation in one space
dimension,
\begin{equation}
  v_{tt} - T(v,x)_{xx} = 0,
\label{inhom nl wave}
\end{equation}
where $(x,t)\in\textbf{R}\times\textbf{R}^+$, $v(x,t)\in\textbf{R}$,
and $T(v,x)$ is a smooth function satisfying
\[
   T_v >0, \quad T_{v v }<0.
\]
These assumptions imply that the equation is hyperbolic and genuinely
nonlinear, so that solutions exhibit wave-like behavior.  Signals
propagate in a forward and backward direction with local nonlinear
wavespeeds
\[
   c = \sqrt{T_v} \com{and} -c = - \sqrt{T_v},
\]
respectively.  Equation (\ref{inhom nl wave}) includes a wide variety
of interesting physical systems, such as one-dimensional nonlinear
elasticity in an inhomogeneous medium \cite{Fritz John,Fritz
  John0}. In the context of elasticity, $x$ is a material coordinate,
$v$ is the strain and $T$ is the elastic stress.

It is well known that solutions of nonlinear hyperbolic equations
generally form shock waves in finite time.  Shocks form as a result of
gradient blowup, which is a consequence of genuine nonlinearity.  In
this paper, we study smooth solutions, and we are particularly
interested in the lifetime of such smooth solutions.  We study the
dynamical system which governs the growth of gradients, and use this
to give estimates on the lifetimes of smooth solutions.  This was
carried out for the homogeneous nonlinear wave equation, obtained by
taking $T=T(v)$, by Lax in~\cite{lax2}, and extensions to larger
systems (with restrictions on the data) were obtained by
John~\cite{Fritz John} and Liu~\cite{Liu1}.  

It is convenient to write \eq{inhom nl wave} as a system, by setting
\[
  u(x,t)=\int{v _t}\;dx,\quad
  p(v ,x)= -T,
\]
which yields the first-order system
\begin{align}
v _t-u_x&=0,\label{GE lagrangian1}\\
u_t+p(v ,x)_x&=0,\label{GE lagrangian2}
\end{align}
with 
\beq
   p_{v }<0, \quad p_{v v }>0.
\label{GE p condition0}
\eeq
We make the further assumption that $p=p(v,x)$ is a $C^3$ function,
and that boundedness of derivatives of $p$ and the sign conditions
\eq{GE p condition0} are uniform in $x$.  In particular, $p_x$ and
$p_{xx}$ are uniformly bounded in $x$.

For smooth solutions, equations \eq{GE lagrangian1}, \eq{GE
lagrangian2} model compressible inviscid flow for general pressure
laws in material coordinates (the Euler equations in a Lagrangian
frame).  Here $p$ is the pressure, $v$ is the specific volume, and $u$
is the fluid velocity.  A general pressure is described by $p=p(v,S)$,
but the entropy $S=S(x)$ is stationary as long as the flow is smooth,
by the Second Law of Thermodynamics.  Moreover, smooth solutions of
the $5\times5$ system of one-dimensional transverse flow in
Magnetohydrodynamics (MHD), which models a fluid coupled to a
transverse (magnetic) vector field, and Eulerian flow in a variable
area duct can also be modeled by (\ref{GE lagrangian1}), (\ref{GE
lagrangian2}).

For homogeneous isentropic flow, we have $p=p(v)$, and (\ref{GE
lagrangian1}), (\ref{GE lagrangian2}) is known as the the
$p$-system.  This is a simplified \x2 system which admits a coordinate
system of Riemann invariants, and for which Lax proved that any
nontrivial data will form a shock wave in finite time~\cite{lax2}.
F. John and T.-P. Liu extended Lax's results to general systems of
conservation laws of the form
\beq
  u_t+f(u)_x=0,
\label{conservation laws}
\eeq
with $u=(u_1,\cdots,u_n)\in\textbf{R}^n$, for $n\geqslant1$ and
$(x,t)\in\textbf{R}\times\textbf{R}^+$.  John proved that gradients
will blow up for small compactly supported data, and this was later
generalized by Liu, provided the total variation of the data is small
enough~\cite{Fritz John,Liu1}.   There are also some singularity
results for multi-dimensional conservation laws, subject to a
restrictive ``null condition''~\cite{Rammaha, sideris}.  In a
recent paper \cite{G3}, the first author generalizes the singularity
formation results in \cite{lax2} to the $3\times3$ compressible
Euler equations with polytropic ideal gas.

In this paper, we give singularity formation results for (\ref{GE
  lagrangian1}), (\ref{GE lagrangian2}), or equivalently
(\ref{inhom nl wave}).  In~\cite{Fritz John,Liu1}, small solutions are
expanded asymptotically along integral curves, and wave interactions
are treated quadratically.  Here we take a different point of view,
considering three wave families, namely forward and backward waves, as
in the $p$-system, coupled with stationary waves, which carry entropy
variations and inhomogeneity.  We are then able to treat all waves
without regard to their wave strength.

Our first task is to define the rarefactive and compressive character
(\RC character) of the nonlinear (non-stationary) waves in smooth
solutions.  In a \x2 system, which is diagonal when expressed in
Riemann invariants, it is clear when a wave is rarefying or
compressing.  However, in larger systems, this distinction is not
clear as waves of different families generally interact continuously
and cannot be decoupled .  The \RC character is a quantitative measure
of how much rarefaction or compression is in the solution at any
point.

In stationary solutions (including those with contact
discontinuities), which include no compression or rarefaction, the
pressure $p$ and velocity $u$ are constant, by the Rankine-Hugoniot
conditions.  We therefore use changes in $p$ (or $u$) to define the
\RC character of the solution.  In doing so, we take the directional
derivative \emph{along the opposite characteristic}, to minimize the
effect of waves of the opposite family, see \cite{G3}.  We use the
superscripts $``\backprime"$ and $``\prime"$ to denote the directional
derivatives along backward and forward characteristics, respectively,
so that
\beq
  {}^{\backprime}=\partial_t-c\, \partial_x \com{and}
  {}^{\prime}=\partial_t+c\, \partial_x,
\label{GE prime backprime}
\eeq
where $c = \sqrt{-p_v}$ is the (local) wavespeed in Lagrangian
coordinates, c.f.~\cite{lax2}.

\begin{defn}
\label{GE def1}
If the solutions of (\ref{GE lagrangian1}), (\ref{GE lagrangian2})
are smooth in an open set $U$ of the (x,t)-plane and $A$ is a point in
$U$, then we say the solution is forward (backward) rarefactive at
$A$, if and only if $p^{\backprime}<0$ ($p^{\prime}<0$); it is forward
(backward) compressive at $A$, if and only if $p^{\backprime}>0$
($p^{\prime}>0$).
\end{defn}

It is convenient to introduce the change of variables
\beq
   h(v,x) \equiv\int_v^{v ^{*}} \sqrt{-p_v}\;dv,\com{and}
   \mu \equiv x,
\label{hmudef}
\eeq
where $v^*$ is a constant or infinity.  Then by
calculating $p^{\backprime}$ and $p^{\prime}$, we introduce
equivalent variables $\alpha$ and $\beta$, defined by
\begin{align*}
  \alpha &\equiv -\frac{p^{\backprime}}{c^2}
        =u_x+ h_x+\frac{p_\mu}{c},\\
  \beta &\equiv -\frac{p^{\prime}}{c^2}
        =u_x-h_x-\frac{p_\mu}{c}.
\end{align*}
Here $u\pm h$ are the Riemann invariants for the corresponding
isentropic $p$-system.  Thus $\alpha$ and $\beta$ are direct
generalizations of the derivatives of the Riemann
invariants.  For smooth solutions, we derive Riccati type
ODEs for $\alpha$ and $\beta$, which provide a framework for studying
smooth solutions and gradient blowup.

\begin{thm}
\label{GE remark}
In (\ref{GE lagrangian1}), (\ref{GE lagrangian2}), smooth
solutions satisfy 
\begin{align}
  \alpha^\prime&=-\frac{c}{2}(\frac{p_\mu}{c})_h
(3\alpha+\beta)+\frac{c_h}{2}(\alpha\beta-\alpha^2), \label{GE rem1}
\\
  \beta^\backprime&=\frac{c}{2}(\frac{p_\mu}{c})_h
(\alpha+3\beta)+\frac{c_h}{2}(\alpha\beta-\beta^2),  \label{GE rem2}
\end{align}
with $c_h>0$.
\end{thm}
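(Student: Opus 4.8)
The plan is to reduce \eqref{GE rem1}, \eqref{GE rem2} to a short list of first-order transport relations and then differentiate once more, using a commutator identity between $\partial_x$ and the directional derivatives. First I would record the algebra of the change of variables \eqref{hmudef}, regarding $c=\sqrt{-p_v}$ and $p_\mu=\partial p/\partial x$ as functions of $(h,\mu)$. The key identity is $\partial p/\partial h = c$, which follows from $p_v=-c^2$ and $\partial v/\partial h = -1/c$; differentiating it in $\mu$ gives $p_{h\mu}=c_\mu$, the relation on which the whole computation hinges. I would also verify $c_h>0$: from $c_v=-p_{vv}/(2c)<0$ and $h_v=-c$ one gets $c_h=c_v/h_v=-c_v/c>0$.

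Next I would derive, directly from \eqref{GE lagrangian1}, \eqref{GE lagrangian2} and the definitions of $\alpha,\beta$, the four first-order relations
\begin{align*}
  u^\prime &= c\beta, & u^\backprime &= -c\alpha,\\
  h^\prime &= -c\beta - p_\mu, & h^\backprime &= -c\alpha + p_\mu.
\end{align*}
These come from $u_t=-p_x=-(c\,h_x+p_\mu)$ and $h_t=-c\,u_x$, together with $u_x+h_x=\alpha-p_\mu/c$ and $u_x-h_x=\beta+p_\mu/c$. In particular $\alpha=-u^\backprime/c$ and $\beta=u^\prime/c$, while the Riemann invariants obey $(u+h)^\prime=-p_\mu$ and $(u-h)^\backprime=-p_\mu$, recovering the $p$-system structure when $p_\mu\equiv0$.

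The heart of the matter is to differentiate $\alpha=u_x+h_x+p_\mu/c$ along the forward characteristic. For this I would use the commutator identity $(w_x)^\prime=(w^\prime)_x-c_x\,w_x$, valid for any smooth $w$, and the chain rule for a state function $F=F(h,\mu)$ transported by the flow, $F^\prime=F_h\,h^\prime+c\,F_\mu$. Applying the commutator to $w=u$ and $w=h$ and adding, the $\beta_x$ terms cancel and one is left with $(u_x+h_x)^\prime=-c_x(\alpha-p_\mu/c)-(p_\mu)_x$; the term $(p_\mu/c)^\prime$ is then supplied by the chain rule. Substituting $c_x=\frac{c_h}{2}(\alpha-\beta)-c_h\,p_\mu/c+c_\mu$, $\ h_x=\frac12(\alpha-\beta)-p_\mu/c$, and $(p_\mu)_x=(p_\mu)_h\,h_x+p_{\mu\mu}$, I would collect the result by order in $(\alpha,\beta)$.

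The main obstacle is the bookkeeping of the inhomogeneity. The quadratic part falls out immediately as $\frac{c_h}{2}(\alpha\beta-\alpha^2)$, but the linear and zeroth-order parts at first sight involve $c_\mu$, $p_{\mu\mu}$ and $(p_\mu)_h$, and these must all collapse to the single coefficient $(p_\mu/c)_h$. Writing $Q=p_\mu/c$, the decisive simplifications are $p_{h\mu}=c_\mu$ (hence $(p_\mu)_h=(cQ)_h=c_hQ+cQ_h$) and the product rule $(cQ)_\mu=c_\mu Q+cQ_\mu$; with these the zeroth-order terms cancel identically and the $\alpha$- and $\beta$-coefficients reduce to $-\frac{3c}{2}(p_\mu/c)_h$ and $-\frac{c}{2}(p_\mu/c)_h$, which is exactly \eqref{GE rem1}. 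Equation \eqref{GE rem2} then follows by the mirror-image computation, differentiating $\beta=u_x-h_x-p_\mu/c$ along the backward characteristic with the companion commutator $(w_x)^\backprime=(w^\backprime)_x+c_x\,w_x$. The positivity $c_h>0$ has already been established above.
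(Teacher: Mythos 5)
Your proposal is correct and follows essentially the same route as the paper: both are direct computations of $\alpha^\prime$ and $\beta^\backprime$ in the $(h,\mu)$ variables, hinging on the identity $p_h=c$ (equivalently $c(\frac{p_\mu}{c})_h=c_\mu-c_h\frac{p_\mu}{c}$), the substitution $h_x=\frac{\alpha-\beta}{2}-\frac{p_\mu}{c}$, and the sign computation $c_h=-c_v/c>0$. The only difference is bookkeeping: you package the cancellation of second derivatives via the commutator $(w_x)^\prime=(w^\prime)_x-c_x\,w_x$ together with the transport laws $u^\prime=c\beta$, $h^\prime=-c\beta-p_\mu$, whereas the paper regroups directly as $(u_t+c\,h_x)_x+(h_t+c\,u_x)_x$ and invokes the equations of motion, and these two regroupings are algebraically identical.
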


Equations (\ref{GE rem1}) and (\ref{GE rem2}) can be
decoupled by use of an integrating factor.  Define
\begin{align}
  y &\equiv \sqrt{c}\alpha-I
    =\sqrt{c}(u+ h)_x+\frac{p_\mu}{\sqrt{c}}-I,
\com{and}\label{GE y def}\\
  q &\equiv \sqrt{c}\beta+I
    =\sqrt{c}(u-h)_x-\frac{p_\mu }{\sqrt{c}}+I,
\label{GE q def}
\end{align}
where
\beq
   I=I(h,\mu) \equiv 
   \int_{h_0}^{h}{\frac{1}{2}\sqrt{c}(\frac{p_\mu}{c})_h}\;dh,
\label{GE Idef}
\eeq
and $h_0$ is a constant.

\begin{thm}
\label{GE them new ODEs}
For smooth solutions of (\ref{GE lagrangian1}), (\ref{GE
lagrangian2}), we have
\begin{align}
  y^\prime&=a_0+a_1\,y - a_2\,y^2,
\label{GE new ode1}\\
  q^\backprime&=a_0-a_1\,q - a_2\,q^2,
\label{GE new ode2}
\end{align}
where
\begin{align}
   a_0 &\equiv -c I_\mu +
  \frac{1}{2}\sqrt{c}(\frac{p_\mu }{c})_h p_\mu -
  c(\frac{p_\mu}{c})_h I-\frac{c_h}{2\sqrt{c}}I^2,
\label{GE a0}\\
   a_1 &\equiv -(2\sqrt{c}I)_h,\label{GE a1}\\
   a_2 &\equiv \frac{c_h}{2\sqrt{c}} > 0.
\label{GE a2}
\end{align}
\end{thm}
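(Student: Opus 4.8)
The plan is to obtain the $y$-equation \eq{GE new ode1} by differentiating the defining relation $y=\sqrt{c}\,\alpha-I$ along the forward characteristic and inserting the Riccati equation \eq{GE rem1} for $\alpha'$; the $q$-equation \eq{GE new ode2} will then follow from the reflection symmetry $x\mapsto-x$ of the system, under which the forward and backward characteristics are exchanged, so that only one of the two computations need be carried out in full.

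First I would record the characteristic derivatives of the coordinate functions. Since $\mu=x$ we have $\mu'=c$ and $\mu^\backprime=-c$. Using $h_v=-c$ together with $v_t=u_x$ and the definitions of $\alpha,\beta$, a short computation gives the clean formulas $h'=-c\beta-p_\mu$ and $h^\backprime=-c\alpha+p_\mu$. Consequently, for any quantity $F=F(h,\mu)$ depending only on the local state and position --- in particular $\sqrt{c}$ and $I$ --- the chain rule yields $F'=F_h(-c\beta-p_\mu)+cF_\mu$, and this is the only way the characteristic derivative acts on $\sqrt c$ and $I$.

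Next I expand $y'=(\sqrt c)'\alpha+\sqrt c\,\alpha'-I'$, substitute $\alpha'$ from \eq{GE rem1}, compute $(\sqrt c)'$ and $I'$ by the chain rule above, and use the identity $I_h=\frac12\sqrt c(p_\mu/c)_h$ read off directly from \eq{GE Idef}. The crux --- and the step I expect to be the main obstacle --- is verifying that every term containing $\beta$ cancels: the $\alpha\beta$ contribution from $(\sqrt c)'\alpha$ cancels the $\frac{c_h}{2}\alpha\beta$ term coming from $\sqrt c\,\alpha'$, while the terms linear in $\beta$ cancel precisely because $cI_h=\frac12 c^{3/2}(p_\mu/c)_h$ matches the $\beta$-coefficient in \eq{GE rem1}. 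This is exactly the integrating-factor property for which $I$ was constructed, and it is what decouples the two equations.

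After the cancellation, $y'$ is a quadratic polynomial in $\alpha$ with state-dependent coefficients; substituting $\alpha=(y+I)/\sqrt c$ turns it into a quadratic in $y$. Matching the quadratic coefficient gives $a_2=c_h/(2\sqrt c)$ at once, positive since $c_h>0$, while matching $a_1$ and $a_0$ against \eq{GE a1} and \eq{GE a0} requires one nontrivial simplification: from $h_v=-c$ and $p_v=-c^2$ one derives $p_h=c$, hence $c_\mu=p_{h\mu}$, and this identity collapses the leftover derivative terms into the stated forms $a_1=-(2\sqrt c I)_h$ and the given $a_0$. Finally, the $q$-equation follows by applying the reflection $x\mapsto-x$, under which $\alpha\leftrightarrow\beta$, ${}'\leftrightarrow{}^\backprime$, $p_\mu\mapsto-p_\mu$ and $I\mapsto-I$; one checks that $a_0$ and $a_2$ are invariant while $a_1$ changes sign, which reproduces \eq{GE new ode2} exactly.
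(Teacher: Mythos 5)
Your proposal is correct and follows essentially the same route as the paper: both arguments hinge on the relation $h^\prime=-c\beta-p_\mu$ and the integrating-factor structure of $y=\sqrt{c}\,\alpha-I$, and the $\beta$-cancellations you single out (the $\alpha\beta$ term against $(\sqrt{c})^\prime\alpha$, the linear $\beta$ term against $I_h=\frac12\sqrt{c}(\frac{p_\mu}{c})_h$) are exactly what the paper achieves by moving the $h^\prime$-terms to the left-hand side of \eq{GE rem1} and recognizing exact derivatives. The only cosmetic difference is that you derive \eq{GE new ode2} from the reflection symmetry $(x,u)\mapsto(-x,-u)$ rather than repeating the computation; this is a clean justification of the paper's remark that the backward case is ``similar,'' though you should state explicitly that the reflection must also flip the sign of $u$ for $\alpha\leftrightarrow\beta$ to hold.
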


We note that these are not closed ODEs, since both the directional
derivatives and the coefficients are dependent on the underlying
solution of (\ref{GE lagrangian1}), (\ref{GE lagrangian2}).
Nevertheless, as in \cite{lax2}, we are able to compare them to closed
ODEs and derive bounds on the lifespan of smooth solutions.  For
convenience, we only consider smooth, i.e. $C^\infty$, initial data,
although our results also apply to $C^2$ initial data.

\begin{thm}
\label{GE Thm singularity2}
Assume the smooth solution takes values in some compact set $\mathcal
K$, uniformly in $x$.  Then there is some constant $N>0$ depending
only on $\mathcal K$, such that, if $y$ or $q$ is less than $-N$
somewhere in the initial data, then $|u_x|$ and/or $|v _x|$ blow up in
finite time: that is, there is some critical time $T_*<\infty$, so
that
\[
  \max\left\{|u_x|,|v_x|\right\} \to \infty \com{as} t\to T_*.
\]
\end{thm}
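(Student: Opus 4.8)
The plan is to exploit the Riccati structure of Theorem \ref{GE them new ODEs}: once $y$ (respectively $q$) is sufficiently negative, the quadratic term $-a_2 y^2$ (resp.\ $-a_2 q^2$) dominates and forces blowup along a characteristic, exactly as in Lax's scalar argument \cite{lax2}. The essential point to overcome is that \eq{GE new ode1}--\eq{GE new ode2} are \emph{not} closed, since the coefficients depend on the underlying solution; the compactness hypothesis is precisely what converts them into a tractable scalar comparison.

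First I would use compactness to bound the coefficients uniformly. Since the solution takes values in $\mathcal K$ uniformly in $x$, and since $c=\sqrt{-p_v}>0$, $c_h>0$ are continuous in the state while $p_\mu$, $p_{\mu\mu}$ and the resulting $I$ are uniformly bounded in $x$ by hypothesis, the coefficients \eq{GE a0}--\eq{GE a2} satisfy $|a_0|\le M_0$, $|a_1|\le M_1$ and $a_2\ge m>0$ for constants $M_0,M_1,m$ depending only on $\mathcal K$; positivity of $m$ comes from $c_h>0$ and the positive lower bound on $c$ on the compact set. For $y<0$ this gives $y^\prime \le M_0 + M_1|y| - m\,y^2$. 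I would then choose the threshold $N$, depending only on $M_0,M_1,m$ (hence only on $\mathcal K$), so large that $M_0+M_1 s \le \tfrac{m}{2}s^2$ for all $s\ge N$, yielding the clean differential inequality
\[
  y^\prime \le -\tfrac{m}{2}\,y^2 < 0 \com{whenever} y \le -N .
\]

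Next, supposing $y\le -N$ at some initial point, I would follow $y$ along the forward characteristic $\tfrac{dx}{dt}=c$ through that point, along which $y$ obeys \eq{GE new ode1}. Because $y^\prime<0$ there, $y$ stays $\le -N$, so the inequality persists for as long as the smooth solution exists and $\mathcal K$ is not exited. Comparing with the scalar Riccati equation $z^\prime=-\tfrac{m}{2}z^2$ with the same initial value $z_0\le -N$, whose solution escapes to $-\infty$ at the finite time $-2/(m z_0)\le 2/(mN)$, a standard Gronwall comparison ($w=y-z$ satisfies $w(0)=0$ and $w^\prime\le -\tfrac{m}{2}(y+z)w$, so $w\le0$) gives $y\le z$ and hence $y\to-\infty$ at some finite $T_*\le 2/(mN)$. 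The symmetric argument using \eq{GE new ode2} along the backward characteristic handles the case $q\le -N$.

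Finally I would translate the blowup of $y$ into blowup of the physical gradients. From \eq{GE y def}, $y=\sqrt{c}\,(u+h)_x + p_\mu/\sqrt{c} - I$, and since $\sqrt{c}$ is bounded above and below and $p_\mu/\sqrt{c}$, $I$ are bounded, $y\to-\infty$ forces $(u+h)_x\to-\infty$. As $h=h(v,\mu)$ with $h_v=-c\ne0$ and $h_\mu$ bounded, we have $(u+h)_x=u_x - c\,v_x + h_\mu$, so $u_x - c\,v_x\to-\infty$ with $c$ bounded; were both $u_x$ and $v_x$ to remain bounded this combination would stay bounded, a contradiction, so $\max\{|u_x|,|v_x|\}\to\infty$ at $T_*$, forcing breakdown of smoothness. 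The main obstacle is the non-closed nature of the ODEs: everything hinges on the compactness hypothesis making $a_2$ uniformly bounded below so that the quadratic term dominates \emph{uniformly} once $y<-N$, which is what guarantees both the persistence of the differential inequality and a finite, controlled blowup time; the non-degeneracy $h_v=-c\ne0$ is the secondary ingredient that ties the blowup of $(u+h)_x$ to the physical variables $u_x,v_x$.
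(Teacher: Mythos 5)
Your proposal is correct and follows essentially the same route as the paper: both arguments reserve a uniform fraction of the quadratic term $-a_2 y^2$ (the paper via the split $\psi^\nu_\pm(y)-\nu a_2 y^2$, you via the threshold $M_0+M_1 s\le\tfrac{m}{2}s^2$) to dominate the lower-order coefficients, which compactness bounds uniformly, and then run a scalar Riccati comparison along the characteristic to force $y\to-\infty$ in finite time. The translation back to blowup of $u_x,v_x$ via boundedness of $c$, $p_\mu$, and $I$ is exactly the content of the paper's Corollaries~\ref{GE def2} and~\ref{ge cor 2}.
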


Blowup of the gradient does not mean that the solution fails to exist:
rather, this usually heralds the formation of a shock, and the
associated decay of solutions.  Generally, solutions are continued as
weak solutions which contain shocks.  However, in order to study shock
propagation our system needs to be in conservation form, and we do not
consider those issues here.

Our assumption that the solution stay in a compact set $\mathcal K$
can be stated technically as 
\begin{gather}
  |h|<B_1, \quad A_2<c<B_2, \ A_3<c_h<B_3,
\label{GE assumption}\\\nonumber
  |c_\mu|<B_4, \quad |c_{\mu\mu}|<B_5, \quad 
  |c_{h\mu}|<B_6, \quad |p_\mu|<B_7, \quad |p_{\mu\mu}|<B_8,
\end{gather}
where $A_i$ and $B_i$ are all positive constants.  These conditions
simply mean that we do not leave the domain $\mathcal K$ in the
solutions we consider, and allow us to stay away from vacuum and other
points where the equation may become singular.  From a practical point
of view, the only restriction on the data (apart from smoothness
conditions) is that the solution stays away from vacuum: in general we
do not expect the pressure $p$ to grow unboundedly.  For the
$p$-system, an invariant region provides an upper bound for the
pressure, and although there is not an invariant region for the full
system, physically we expect the pressure to remain finite.  We note
that the assumption that the solution does not form a vacuum is
implicit in~\cite{lax2}, but this is reasonable as a vacuum cannot
form in finite time, see~\cite{young Global interaction,young com}.

Theorem \ref{GE Thm singularity2} implies that gradients of
solutions blow up if the initial compressions are strong enough.
When the variation of entropy is mild, $N$ is close to zero, so the
shock free solutions are ``almost rarefactive", which is consistent
with Lax's singularity formation results in \cite{lax2}. In
\cite{young blake 1} and a forthcoming paper \cite{G4}, examples of
solutions containing compressive waves are constructed, but the gradients
of those solutions remain finite.

In a series of recent papers \cite{young blake 1,young blake 2,young
  blake 3}, the possibility of time-periodic solutions in the
compressible Euler equations has been demonstrated.  A critical
feature of this study is how the \RC structure of waves can
change across a contact discontinuity.  In this paper we check the
consistency of our results, which presuppose a smooth entropy field,
with those results.  By studying the \RC structure and the way
in which it can change further, we expect eventually to see
time-periodic solutions as in \cite{young blake 1,young blake 2} with
piecewise smooth entropy, consisting of both contact discontinuities
and smooth entropy variations.

Our results apply directly to a number of systems having structure
similar to the inhomogeneous nonlinear wave equation.  In addition to
equations (\ref{GE lagrangian1}), (\ref{GE lagrangian2}), we apply
these ideas to transverse flow in one-dimensional magnetohydrodynamics
(MHD), a $5\times5$ system modeling fluid motion coupled to a
transverse magnetic field.  By restricting to a polytropic ideal gas,
we are able to give a stronger singularity formation result than that
of Theorem~\ref{GE Thm singularity2}, similar to that of \cite{G3}.
Finally, we consider inviscid compressible flow in a varying area
duct, to which similar ideas apply.

The paper is arranged as follows.  In Section~2, we give the
background of the equations and establish some useful identities.  In
Section~3, we define rarefactive and compressive waves.  In
Sections~4 and 5, we prove Theorems \ref{GE remark}--\ref{GE Thm
  singularity2}, respectively. In Section~6, we consider the
consistency of \RC structures, and in Sections~7 and~8, we apply our
ideas to one-dimensional transverse flow of MHD and compressible flow
in a varying duct.

\section{Coordinates and background}

We focus on equations (\ref{GE lagrangian1}), (\ref{GE
lagrangian2}) from now on.  We assume that $p(v ,x)$ is a smooth
function of $v $ and $x$, satisfying
\[
   p_{v }<0, \com{and} p_{v v }>0,
\]
for all $x$ and $v=v(x,t)\in(0,\infty)$.  These conditions imply
hyperbolicity and genuine nonlinearity, respectively.  The vacuum
state corresponds to $v=\infty$; however, since we assume the data
remains in a compact set, we will not address questions at vacuum.
The local absolute wavespeed is
\beq
   c(v ,x) \equiv \sqrt{-p_v}>0.
\label{GE c ptau}
\eeq
We make the change of variables
\[
  h(v ,x) \equiv \int_v ^{v^{*}} c\; dv 
     =\int_v ^{v ^{*}} \sqrt{-p_v}\;dv,
\]
see \eq{hmudef}, where $v^*>0$ is a convenient constant (or
$\infty$ if the integral converges uniformly).  Since $p(v,x)$ is
smooth, the function $h(v,x)$ is also smooth with respect to $v$ and
$x$.  Moreover, since
\beq 
  h_v (v ,x)=-c<0,
\label{GE h tau}
\eeq
the inverse function $v =v (h,\mu)$ is smooth with respect to $h$ and
$\mu$, where we have set
\beq
  \mu \equiv x.
\label{GE mu}
\eeq

For any function $f(v,x)$, we will write
\[
   f(h,\mu)=f(h(v ,x),\mu)=f(v ,x),
\]
without ambiguity, and we use the subscript notation
\begin{gather*}
  f_x = \dbyd{x}{f(v(x,t),x)},\quad
  f_{\barx} = \dbyd{x}{f(v,x)},\quad
  f_v = \dbyd{v}{f(v,x)},\\
  f_h = \dbyd{h}{f(h,\mu)},\quad
  f_\mu = \dbyd{\mu}{f(h,\mu)},
\end{gather*}
for the various partial derivatives of $f$.

We can relate the different partial derivatives as follows:
by (\ref{GE h tau}),
\beq 
   v _h=-\frac{1}{c},
\label{GE tau h}
\com{so that}
  v _t=v _h h_t=-\frac{1}{c} h_t.
\eeq
Furthermore, since
\beq
  -c^2=p_v =p_h h_v =-c p_h,
\com{we have} 
   p_h=c,
\label{GE p h}
\eeq
and so
\[
  p_x=p_h h_x+p_\mu=c h_x+ p_\mu.
\]
Thus, for smooth solutions, (\ref{GE lagrangian1}), (\ref{GE
  lagrangian2})  can be written as
\begin{align}
  h_t+c u_x&=0,
\label{GE lagrangian1 hm}\\\label{GE lagrangian2 hm}
  u_t+c h_x+p_\mu&=0.
\end{align}

Next, differentiating $v =v(h(v , x),\mu)$ with respect to $x$, and
recalling \eq{GE mu} and \eq{GE tau h}, we get
\[
  0=v _h h_{\barx}+v _\mu,
\com{so that}
  v _\mu=-v _h h_{\barx}=\frac{h_{\barx}}{c}.
\]
Similarly, for any function $f(v,x)$, we get
\begin{align}
  f_\mu &= f(v (h,\mu),x)_\mu\nn\\
  &= f_v v _\mu+f_{\barx}\nn\\
  &= \frac{f_v }{c}h_{\barx}+f_{\barx},\label{GE xiMtaux}
\end{align}
and
\beq 
  f_h=f_v v _h =-\frac{f_v }{c}.
\label{GE xiZtaux}
\eeq
It follows that, for smooth solutions, $p$ and $c$ are smooth with
respect to $h$ and $\mu$.  Moreover, using (\ref{GE xiMtaux}) and
(\ref{GE xiZtaux}), any quantities such as the \RC character,
ODEs, and singularity formation results in this paper can all be
expressed in the variables $(v ,x)$ instead of $(h,\mu)$.

\section{Compressive and Rarefactive waves}

In this section, we define the rarefactive ($R$) and compressive
($C$) characters of (\ref{GE lagrangian1}), (\ref{GE
lagrangian2}), which quantitively indicate the amount of rarefaction or
compression in the solutions at any point.

We first consider isentropic flow, for which $p=p(v)$ and our system
reduces to the $p$-system,
\begin{align}
  h_t+c u_x&=0,\label{GE p1 hm}\\
  u_t+c h_x&=0.\label{GE p2 hm}
\end{align} 
The Riemann invariants $s=u+h$ and $r=u-h$ satisfy the diagonal system
\[
  s_t+cs_x=0,\quad
  r_t-cr_x=0,
\]
and so are constant along forward and backward characteristics,
\[
  \frac{dx}{dt}=+ c,\quad
  \frac{dx}{dt}=- c,
\]
respectively.  In an isentropic domain, because the system is
diagonal, it is clear when waves are compressive or rarefactive:
indeed, the amount of compression or rarefaction can be measured by
derivatives of the appropriate Riemann invariant.  There are several
equivalent conditions, and for us it is convenient to consider the
change in pressure: if $p$ decreases as we traverse the wave from
front to back, the wave is rarefactive (\emph{R}), while if $p$
increases, it is compressive (\emph{C})~\cite{G3,lax2,young Global
  interaction}.  This is consistent with the entropy condition for
shocks, which states that the pressure is always larger behind a
shock. 

When $p=p(v,x)$ explicitly depends on $x$, we first consider
stationary solutions, in which there are no compressive or rarefactive
waves, so the \RC characters should vanish.  In stationary solutions,
the pressure $p$ is constant,
\[
  p_t = p_v\;v_t = 0 \com{and} p_x = -u_t = 0,
\]
so its directional directives are zero.  Physically, in gas dynamics,
this means that pressure is not impacted by the variation of entropy,
c.f.~\cite{G3}.  Thus, by considering the directional derivatives of
$p$ along the \emph{opposite} characteristics, we obtain
Definition~\ref{GE def1}: the solution is forward (backward)
rarefactive at $A$, if and only if $p^{\backprime}<0$
($p^{\prime}<0$); it is forward (backward) compressive at $A$, if and
only if $p^{\backprime}>0$ ($p^{\prime}>0$).

We could also use $u$ to define the \RC character, as this is
also constant in stationary solutions.  We define
\[
  \alpha \equiv -\frac{p^\backprime}{c^2} \com{and}
  \beta \equiv -\frac{p^\prime}{c^2}.
\]

\begin{lem}
\label{alphabeta ux hx}
For smooth solutions of (\ref{GE lagrangian1}), (\ref{GE
  lagrangian2}), we have
\[
  p^\prime = -c u^\prime \com{and}
  p^\backprime = c u^\backprime,
\]
while also
\beq
  \alpha=u_x+h_x+\frac{p_\mu}{c}  \com{and}
  \beta=u_x-h_x-\frac{p_\mu}{c}.
\label{GE alpha beta}
\eeq
\end{lem}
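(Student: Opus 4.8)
The plan is to prove the two pressure relations directly from the definitions of the directional derivatives and then substitute the governing equations to obtain the explicit formulas for $\alpha$ and $\beta$. First I would recall from \eq{GE prime backprime} that $p^\prime = p_t + c\,p_x$ and $p^\backprime = p_t - c\,p_x$. Using the chain rule together with the identity $p_h = c$ from \eq{GE p h} and the relation $p_x = c\,h_x + p_\mu$ established in Section~2, I would express $p_t$ in terms of the evolution equations. Specifically, since $p_t = p_h\,h_t + p_\mu\cdot 0 = c\,h_t$ (as $\mu = x$ is stationary in time), equation \eq{GE lagrangian1 hm} gives $h_t = -c\,u_x$, so that $p_t = -c^2\,u_x$.

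The next step is to assemble the directional derivatives. For the forward derivative, $p^\prime = p_t + c\,p_x = -c^2 u_x + c(c\,h_x + p_\mu)$. To show this equals $-c\,u^\prime$, I would compute $u^\prime = u_t + c\,u_x$ and substitute the velocity equation \eq{GE lagrangian2 hm}, namely $u_t = -c\,h_x - p_\mu$, giving $u^\prime = -c\,h_x - p_\mu + c\,u_x$; multiplying by $-c$ then matches the expression for $p^\prime$. The backward relation $p^\backprime = c\,u^\backprime$ follows by the same computation with the sign of the spatial-derivative term reversed throughout. These are essentially bookkeeping steps once the two evolution equations are in hand.

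Finally, to obtain \eq{GE alpha beta}, I would divide the assembled expressions by $-c^2$. From $p^\prime = -c^2 u_x + c^2 h_x + c\,p_\mu$ we get $\beta = -p^\prime/c^2 = u_x - h_x - p_\mu/c$, and the analogous manipulation of $p^\backprime$ yields $\alpha = u_x + h_x + p_\mu/c$. I do not anticipate a genuine obstacle here, since the result is a direct consequence of the definitions combined with the already-derived identities \eq{GE p h} and \eq{GE lagrangian1 hm}, \eq{GE lagrangian2 hm}. The only point requiring a little care is keeping track of the fact that $p_t = c\,h_t$ rather than $p_x = c\,h_x$ alone, because the spatial derivative picks up the extra inhomogeneous term $p_\mu$ while the time derivative does not (since $\mu$ is time-independent); conflating these would misplace the $p_\mu/c$ terms that distinguish the inhomogeneous case from the classical $p$-system.
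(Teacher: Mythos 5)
Your proposal is correct and follows essentially the same route as the paper: both verify $p^\prime=-cu^\prime$ and $p^\backprime=cu^\backprime$ by direct substitution of the evolution equations, and both obtain \eq{GE alpha beta} by expanding $p^\prime=p_t+cp_x$ via $p_h=c$ and $p_x=ch_x+p_\mu$ and dividing by $-c^2$. The only cosmetic difference is that the paper computes the first identity in the $(v,x)$ form of the equations ($cu_t+c^2u_x=-cp_x-p_vv_t$) whereas you work entirely in the $(h,\mu)$ form; your cautionary remark about $p_t=ch_t$ versus $p_x=ch_x+p_\mu$ is exactly the right point of care.
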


\begin{proof}
By (\ref{GE lagrangian1}), (\ref{GE lagrangian2}) and (\ref{GE c ptau}),
\[
  c u^\prime=c u_t+c^2 u_x=-cp_x-p_v v_t=-p^{\prime},
\]
and similarly $p^\backprime=cu^\backprime$.  By (\ref{GE p h}) and
(\ref{GE lagrangian1 hm}),
\begin{align*}
  -c^2 \beta &= p^\prime = p_t+c p_x\\ 
             &= p_h h_t +c(p_h h_x+p_\mu)\\
             &=-c^2(u_x-h_x-\frac{p_\mu}{c}),
\end{align*}
and similarly for $\alpha$, so \eq{GE alpha beta} follows.
\end{proof}

\begin{cor}
\label{GE def2}
The \RC character of a smooth solution is given by:
\[
\begin{array}{llll}
 Forward& R & \it{iff} & \alpha>0,\\
 Forward& C & \it{iff} & \alpha<0,\\
 Backward& R & \it{iff} & \beta>0,\\
 Backward& C & \it{iff} & \beta<0.
\end{array}
\]
Moreover, provided the solution values remain in $\mathcal K$,
\beq
  |\alpha|\ \text{or}\ |\beta| \to \infty \com{iff}
  |u_x|\ \text{or}\ |v _x| \to \infty.
\label{GE alpha beta blowup}
\eeq
\end{cor}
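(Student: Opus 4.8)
The plan is to establish the two assertions of the corollary separately. The \RC classification table is a direct bookkeeping of signs, while the blowup equivalence \eq{GE alpha beta blowup} follows from the explicit formulas of Lemma~\ref{alphabeta ux hx} combined with the compactness hypothesis.

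For the classification table, I would simply read off signs from Definition~\ref{GE def1}. By definition $\alpha=-p^{\backprime}/c^2$ and $\beta=-p^{\prime}/c^2$, and since $c^2>0$ we have $\alpha>0\iff p^{\backprime}<0$ and $\alpha<0\iff p^{\backprime}>0$, and likewise $\beta>0\iff p^{\prime}<0$, $\beta<0\iff p^{\prime}>0$. Matching these against Definition~\ref{GE def1}—forward $R$/$C$ at $A$ iff $p^{\backprime}\lessgtr 0$, backward $R$/$C$ iff $p^{\prime}\lessgtr 0$—yields the four lines of the table at once.

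For the blowup equivalence I would use \eq{GE alpha beta}. Adding and subtracting gives
\[
  u_x=\tfrac12(\alpha+\beta), \qquad h_x=\tfrac12(\alpha-\beta)-\frac{p_\mu}{c}.
\]
To pass from $h_x$ to $v_x$ I would differentiate $h=h(v,x)$ along $x$ and use \eq{GE h tau}, so that $h_x=h_v\,v_x+h_{\barx}=-c\,v_x+h_{\barx}$, whence
\[
  v_x=\frac{1}{c}\Big(h_{\barx}-\tfrac12(\alpha-\beta)+\frac{p_\mu}{c}\Big).
\]
These two identities exhibit $(u_x,v_x)$ as an affine function of $(\alpha,\beta)$ whose linear part has determinant $1/(2c)\neq 0$; inverting expresses $(\alpha,\beta)$ as an affine function of $(u_x,v_x)$. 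Since $c$ is bounded above and below on $\mathcal K$, both this matrix and its inverse have uniformly bounded entries, so—provided the inhomogeneous terms $p_\mu/c$ and $h_{\barx}$ remain bounded—one sees that $\max\{|\alpha|,|\beta|\}\to\infty$ if and only if $\max\{|u_x|,|v_x|\}\to\infty$, which is exactly \eq{GE alpha beta blowup}.

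The step that requires the most care, and the only place where the compactness/uniform-in-$x$ hypotheses \eq{GE assumption} genuinely enter, is the boundedness of the inhomogeneous terms on $\mathcal K$. The term $p_\mu/c$ is controlled directly by $|p_\mu|<B_7$ and $c>A_2$. For $h_{\barx}$ I would write $h_{\barx}=\int_v^{v^{*}} c_{\barx}\,d\tilde v$ (equivalently $h_{\barx}=c\,v_\mu$) and bound the integrand using the uniform $C^3$ bounds on $p$, taking care with convergence when $v^{*}=\infty$, where the uniform convergence assumption on the integral defining $h$ is needed. Once these bounds are secured, the remainder is the elementary linear algebra above.
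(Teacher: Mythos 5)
Your proposal is correct and follows essentially the same route as the paper: the sign table is read directly from Definition~\ref{GE def1} together with the definitions of $\alpha$ and $\beta$, and the blowup equivalence comes from the linear identities $\alpha+\beta=2u_x$ and $\alpha-\beta=2(h_x+p_\mu/c)$ combined with the compactness bounds. You are in fact slightly more careful than the paper's one-line conclusion, in that you spell out the conversion $h_x=-c\,v_x+h_{\barx}$ and the boundedness of $h_{\barx}$ needed to pass from $h_x$ to $v_x$, a step the paper leaves implicit.
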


\begin{proof}
Clearly, by (\ref{GE alpha beta}),
\[
  p^\backprime \gtrless 0 \Leftrightarrow \alpha \lessgtr 0,
\com{and}
  p^\prime \gtrless 0 \Leftrightarrow \beta \lessgtr 0.
\]
By Lemma \ref{alphabeta ux hx},
\[
  \alpha+\beta=2u_x,\quad
  \alpha-\beta=2(h_x+\frac{p_\mu}{c}),
\]
and (\ref{GE alpha beta blowup}) follows since $p_\mu$ and $c$ remain
finite.
\end{proof}

In an isentropic domain, i.e. $p=p(v)$, it is clear that
\[
   s_{x}=\alpha \com{and} r_{x}=\beta,
\]
so we can regard $\alpha$ and $\beta$ as direct generalizations of the
derivatives of the Riemann invariants.

\section{Differential equations for gradients}

In this section, we consider the characteristic decompositions of
smooth solutions.  By considering the directional derivatives of
$\alpha$ and $\beta$, we derive the ODEs for $\alpha$ and $\beta$
as stated in Theorem~\ref{GE remark}.

{\it Proof of Theorem \ref{GE remark}}.\quad
We show (\ref{GE rem1}), since (\ref{GE rem2}) follows in exactly the
same way.  We have
\begin{align}
\alpha^\prime&=(u_x+ h_x+\frac{p_\mu}{c})_t+c(u_x+
h_x+\frac{p_\mu}{c})_x\nn\\\label{GE
app0}&=(u_{xt}+c h_{xx})+(h_{xt}+c u_{xx})
+(\frac{p_\mu}{c})_t+c(\frac{p_\mu}{c})_x\\\nn&=(u_{t}+c
h_{x})_x+(h_{t}+c u_{x})_x-c_x h_x-c_x
u_x+(\frac{p_\mu}{c})_t+c(\frac{p_\mu}{c})_x.
\end{align}
By (\ref{GE lagrangian1 hm}), (\ref{GE lagrangian2 hm}),
\[
  (u_t+c h_x)_x=(-p_\mu)_x=-p_{\mu h} h_x-p_{\mu \mu },
\]
and
\[
  (h_t+c u_x)_x=0.
\]
Thus the right hand side of (\ref{GE app0}) is
\beq
  -p_{\mu h} h_x-p_{\mu \mu }-
  (c_h h_x+c_\mu)(h_x+u_x)+(\frac{p_\mu }{c})_h(h_t+ c h_x)
  +c(\frac{p_\mu}{c})_\mu,
\label{GE app1}
\eeq
since $f_x=f_h h_x+f_\mu$ for any function $f$.

By (\ref{GE alpha beta}) and (\ref{GE lagrangian1 hm}), we have
\[
   h_x=\alpha-u_x-\frac{p_\mu }{c},
\com{and}
   h_t=-c u_x,
\]
and by (\ref{GE p h}), 
\beq
   c(\frac{p_\mu }{c})_h=c_\mu -c_h \frac{p_\mu}{c}.
\label{GE c pm c h}
\eeq
Thus (\ref{GE app1}) can be simplified to
\begin{align*}
-p_{\mu h}\;&
(\alpha-u_x-\frac{p_\mu }{c})-p_{\mu \mu }-(c_h \alpha-c_h u_x-c_h
\frac{p_\mu }{c}+c_\mu )(\alpha-\frac{p_\mu }{c})\\&\qquad\qquad{}+(\frac{p_\mu
}{c})_h( c \alpha-2c u_x-p_\mu )+c(\frac{p_\mu }{c})_\mu\\
&=-c(\frac{p_\mu }{c})_h u_x+[ c_h u_x -c(\frac{p_\mu }{c})_h]
\alpha-c_h \alpha^2.
\end{align*}
Finally, making the substitution
\[
   u_x=\frac{\alpha+\beta}{2}
\]
yields (\ref{GE rem1}).

By (\ref{GE p condition0}), (\ref{GE c ptau}) and (\ref{GE h tau}),
\beq
   0>(\sqrt{-p_v })_v =c_v =c_h h_v =c_h (-c),
\label{GE ChPositive}
\eeq
so $c_h>0$, and the theorem is proved.
$\square$\bigskip

\begin{cor}
\label{GE cor change coordinate}
For smooth solutions of (\ref{GE lagrangian1}), (\ref{GE
  lagrangian2}), we have
\[
   \alpha^\prime=-\frac{c}{4}(\frac{p_{\barx}}{p_v})_v
  (3\alpha+\beta)-\frac{c_v }{2c}(\alpha\beta-\alpha^2),
\]
and
\[
  \beta^\backprime=\frac{c}{4} (\frac{p_{\barx}}{p_v})_v
  (\alpha+3\beta)-\frac{c_v }{2c}(\alpha\beta-\beta^2).
\]
\end{cor}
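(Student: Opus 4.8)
The plan is to derive the Corollary directly from Theorem~\ref{GE remark} by rewriting its two coefficients, $\frac{c_h}{2}$ and $\frac{c}{2}(\frac{p_\mu}{c})_h$, in the original variables $(v,x)$ using the chain-rule identities \eq{GE xiMtaux} and \eq{GE xiZtaux}. The quadratic coefficient is immediate: by \eq{GE ChPositive} we have $c_v=-c\,c_h$, so that $\frac{c_h}{2}=-\frac{c_v}{2c}$, and substituting this into \eq{GE rem1} and \eq{GE rem2} already produces the terms $-\frac{c_v}{2c}(\alpha\beta-\alpha^2)$ and $-\frac{c_v}{2c}(\alpha\beta-\beta^2)$. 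It then remains only to verify the single identity
\beq
  \frac{c}{2}(\frac{p_\mu}{c})_h=\frac{c}{4}(\frac{p_{\barx}}{p_v})_v,
\eeq
after which the two stated formulas follow verbatim.

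To prove this identity I would first use \eq{GE xiZtaux} to turn the outer $h$-derivative into a $v$-derivative, $(\frac{p_\mu}{c})_h=-\frac{1}{c}(\frac{p_\mu}{c})_v$, and then express $p_\mu$ itself in the $(v,x)$ variables via \eq{GE xiMtaux}; since $p_v=-c^2$ this gives $p_\mu=-c\,h_{\barx}+p_{\barx}$. The delicate point---the step I expect to demand the most care---is that $p_\mu$ carries the solution-dependent quantity $h_{\barx}$, so differentiating it in $v$ introduces the second-order term $h_{\barx v}$. This is resolved by differentiating the definition $h(v,x)=\int_v^{v^*}c\,dv$ under the integral sign, which yields $h_{\barx v}=-c_{\barx}$, together with $c\,c_{\barx}=-\tfrac12 p_{v\barx}$ obtained from $c^2=-p_v$.

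With these in hand I would expand $(\frac{p_\mu}{c})_v$ (equivalently $c(\frac{p_\mu}{c})_h=p_{\mu h}-\frac{p_\mu c_h}{c}$) and observe the crucial cancellation: the two contributions proportional to $h_{\barx}$, one coming from $p_{\mu h}$ and one from $\frac{p_\mu c_h}{c}$, cancel exactly, leaving an expression involving only $p_{v\barx}$ and $p_{\barx}$. A direct quotient-rule computation of $(\frac{p_{\barx}}{p_v})_v=(-\frac{p_{\barx}}{c^2})_v$ then matches this, the only non-formal input being the equality of mixed partials $p_{v\barx}=p_{\barx v}$, which holds since $p$ is $C^3$. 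Thus the Corollary is purely a change-of-variables computation; the sole genuine obstacle is organizing the bookkeeping so that the $h_{\barx}$ terms cancel and the final coefficient depends only on derivatives of $p$ taken at fixed $v$.
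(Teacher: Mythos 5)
Your proposal is correct and is essentially the paper's own argument: the paper likewise reduces the corollary to the single identity $c(\frac{p_\mu}{c})_h=c_\mu-c_h\frac{p_\mu}{c}=c_{\barx}+\frac{c_v}{c^2}p_{\barx}=\frac{c}{2}(\frac{p_{\barx}}{p_v})_v$ using \eq{GE xiMtaux}, \eq{GE xiZtaux} and \eq{GE ChPositive}, with exactly the cancellation of the $h_{\barx}$ terms that you identify as the crux. One cosmetic remark: $h_{\barx}$ is a fixed function of $(v,x)$ determined by $p$ alone (not a solution-dependent quantity), so the step you flag as delicate needs nothing beyond the chain rule and $h_{\barx v}=-c_{\barx}$, as you use.
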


\begin{proof}
By (\ref{GE c ptau}), (\ref{GE h tau}), (\ref{GE xiMtaux}), (\ref{GE
xiZtaux}) and (\ref{GE c pm c h}), we have
\begin{align}
  c(\frac{p_\mu }{c})_h&=c_\mu -c_h\frac{p_\mu }{c}\nn
    \\&=c_{\barx}+\frac{c_v }{c^2}p_{\barx} \nn\\
    &=\frac{c}{2}(\frac{p_{\barx}}{p_v })_v,
\label{GE RC function}
\end{align}
and by (\ref{GE ChPositive}),
\[
  \frac{c_h}{2}=-\frac{c_v }{2c},
\]
and the corollary follows from Theorem \ref{GE remark}.
\end{proof}

We now make another change of variables and transform (\ref{GE rem1})
and (\ref{GE rem2}) into decoupled differential equations by use of an
integrating factor.

{\it Proof of Theorem \ref{GE them new ODEs}}.\quad
First, the condition $a_2>0$ follows immediately from (\ref{GE
  ChPositive}).

By (\ref{GE alpha beta}) and (\ref{GE lagrangian1 hm}),
\[
  h^{\prime}=h_t+c h_x=-c u_x+ c h_x=-c(\beta+\frac{p_\mu }{c}),
\]
so that
\[
  \beta=-\frac{h^\prime}{c}-\frac{p_\mu }{c}.
\]
Hence, (\ref{GE rem1}) can be written
\[
  \alpha^\prime=-\frac{c}{2}(\frac{p_\mu }{c})_h
  (3\alpha-\frac{h^\prime}{c}-\frac{p_\mu }{c})+\frac{c_h}{2}\alpha
  (-\frac{h^\prime}{c}-\frac{p_\mu }{c})-\frac{c_h}{2}\alpha^2.
\]
We move the terms including $h^\prime$ to the left hand side, so
\begin{align}
  \alpha^{\prime}-\frac{1}{2}(\frac{p_\mu }{c})_h
&h^\prime+\frac{c_h}{2c}\alpha h^{\prime}\nn\\
&=\frac{1}{2}(\frac{p_\mu
}{c})_h p_\mu +(-\frac{3}{2}c(\frac{p_\mu }{c})_h-
\frac{c_h}{2c}p_\mu )\alpha-\frac{c_h}{2}\alpha^2.
\label{GE decouple 1}
\end{align}
Now, by (\ref{GE p h}),
\[
   -\frac{3}{2}c(\frac{p_\mu}{c})_h- \frac{c_h}{2c}p_\mu
  =-c(\frac{p_\mu }{c})_h-\frac{p_{\mu h}}{2}
  =-c(\frac{p_\mu }{c})_h-\frac{c_\mu }{2},
\]
and, since $\mu'=c$, we have
\[
  \sqrt{c}\alpha^\prime+\frac{c_h}{2\sqrt{c}}\alpha h^\prime+
  \frac{\sqrt{c}}{2}c_\mu \alpha=(\sqrt{c}\alpha)^\prime.
\]
Thus, multiplying \eq{GE decouple 1} by $\sqrt{c}$ and simplifying, we
get
\begin{align}
  (\sqrt{c}\alpha)^\prime-\frac{1}{2}\sqrt{c}&
   (\frac{p_\mu }{c})_h h^{\prime}\nn\\
  &=\frac{1}{2}\sqrt{c}(\frac{p_\mu }{c})_h p_\mu
  -c\sqrt{c}(\frac{p_\mu}{c})_h\alpha-\frac{c_h\sqrt{c}}{2}\alpha^2.
\label{GE decouple2}
\end{align}

By (\ref{GE y def}),
\[
  \alpha=\frac{y+I}{\sqrt{c}},
\]
where $I$ is defined in (\ref{GE Idef}) and satisfies
\[
   I^\prime=I_h h^\prime+c I_\mu.
\]
Using these in (\ref{GE decouple2}), we get
\beq
   y^\prime=a_0+a_1 y-a_2 y^2,
\label{GE decouple3}
\eeq
where $a_0$, $a_1$ and $a_2$ are defined in \eq{GE a0}--\eq{GE a2}. 

The derivation of the differential equation along backward
characteristics,
\beq 
   q^\backprime=a_0 - a_1 q - a_2 q^2,
\label{GE decouple4}
\eeq
where $q$ is defined in (\ref{GE q def}), is similar, and the proof is
complete.
$\square$\bigskip

\begin{cor}
\label{ge cor 2}
A singularity (gradient blowup) forms if and only if
\[
   |y|\ \text{or}\ |q|\to \infty \com{iff}
   |u_x|\ \text{or}\ |v _x| \to \infty,
\]
provided the solution takes values in the compact set $\mathcal K$.
\end{cor}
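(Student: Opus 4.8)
The plan is to chain together two equivalences. The first—that gradient blowup ($|u_x|$ or $|v_x|\to\infty$) is equivalent to $|\alpha|$ or $|\beta|\to\infty$—is already in hand from Corollary~\ref{GE def2}, specifically the relation~\eqref{GE alpha beta blowup}, which holds precisely because the solution stays in $\mathcal K$. It therefore remains only to show that, on $\mathcal K$, blowup of $|y|$ or $|q|$ is equivalent to blowup of $|\alpha|$ or $|\beta|$; the corollary then follows by composing the two equivalences.

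For the remaining step I would invert the defining relations~\eqref{GE y def} and~\eqref{GE q def} to write
\[
  \alpha = \frac{y+I}{\sqrt{c}}, \qquad \beta = \frac{q-I}{\sqrt{c}},
\]
and then observe that both $\sqrt{c}$ and $I$ are uniformly controlled on $\mathcal K$. By the first line of~\eqref{GE assumption} we have $0<A_2<c<B_2$, so $\sqrt{c}$ and $1/\sqrt{c}$ are both bounded above and bounded away from zero. For $I=I(h,\mu)$ from~\eqref{GE Idef}, I would rewrite the integrand $\tfrac12\sqrt{c}(p_\mu/c)_h$ using~\eqref{GE c pm c h} as a combination of $c_\mu/\sqrt{c}$ and $c_h p_\mu/c^{3/2}$, which is uniformly bounded by the assumptions~\eqref{GE assumption} on $c$, $c_h$, $c_\mu$ and $p_\mu$; integrating this bounded quantity over the bounded $h$-range $|h|<B_1$ (with $h_0$ fixed) shows $I$ stays finite.

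With these bounds the equivalence is immediate: since $I$ remains finite and $\sqrt{c}$ is comparable to a positive constant, $|\alpha|=|y+I|/\sqrt{c}$ tends to infinity exactly when $|y|$ does, and likewise $|\beta|$ tends to infinity exactly when $|q|$ does. Combining this with~\eqref{GE alpha beta blowup} yields the stated chain of equivalences. I expect no genuine obstacle here; the only step requiring a little care is the uniform boundedness of $I$, which reduces to reading off the requisite bounds from~\eqref{GE assumption} after the rewriting via~\eqref{GE c pm c h}.
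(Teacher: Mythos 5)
Your proposal is correct and takes essentially the same route as the paper: both reduce the corollary to the linear relations between $(y,q)$ and $(\alpha,\beta)$ with coefficients $\sqrt{c}$ and $I$ controlled on $\mathcal K$ (the paper via the combinations $y+q=\sqrt{c}(\alpha+\beta)$, $y-q=\sqrt{c}(\alpha-\beta)-2I$, you via the pointwise inversion $\alpha=(y+I)/\sqrt{c}$, $\beta=(q-I)/\sqrt{c}$), and then invoke (\ref{GE alpha beta blowup}). Your explicit verification that $I$ stays finite, by rewriting the integrand of (\ref{GE Idef}) with (\ref{GE c pm c h}) and reading off the bounds from (\ref{GE assumption}), is exactly the compactness step the paper cites in one line.
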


\begin{proof}
By (\ref{GE y def}) and (\ref{GE q def}),
\begin{align*}
   y+q&=\sqrt{c}(\alpha+\beta),\\
   y-q&=\sqrt{c}(\alpha-\beta)-2I.
\end{align*}
By (\ref{GE c pm c h}), (\ref{GE Idef}) and compactness, $I$ remains
finite, and the result follows.
\end{proof}

Because the coefficients $a_0$, $a_1$, and $a_2$ in Theorem \ref{GE
  them new ODEs} don't include derivative terms $v_x$, $u_x$, $v_t$ or
$u_t$, they are lower order when compared to $y$ and $q$.  Using
(\ref{GE xiMtaux}) and (\ref{GE xiZtaux}), these coefficients can be
expressed in terms of $v$ and $\barx$ rather than $h$ and $\mu$, as in
Corollary \ref{GE cor change coordinate}.

In the $p$-system, $p=p(v)$ and $p_\mu=0$, so that $a_0=a_1=0$.  In
this case, (\ref{GE new ode1}) and (\ref{GE new ode2}) become
\beq
   y^\prime= -a_2 y^2,\quad q^\backprime= -a_2q^2,
\label{ODEs psystem}
\eeq
which are exactly the ODEs derived in \cite{lax2,lax3} for the
isentropic homogeneous case.

\section{Formation of singularity}

We now consider the formation of singularities, which take the form of
the blowup of gradients $u_x$ and/or $v_x$, and correspond to shock
formation in conservative systems.

We will study the equations  (\ref{GE new ode1}), (\ref{GE new ode2})
as a dynamical system, even though this is not a pure system of ODEs.
First, consider the ODE 
\[
    \dot \xi = \psi_\pm(\xi),
\]
where $\psi_\pm$ is defined by
\beq
  \psi_\pm(\xi) \equiv a_0 \pm a_1 \,\xi - a_2 \,\xi^2,
\label{GE xi}
\eeq
with $a_2>0$, and the $a_i$ are treated as constants.  The equilibria,
if they exist, are the roots of the quadratic equation
\[
  a_0 \pm a_1 \,\xi - a_2 \,\xi^2 = 0,
\]
and we have $\dot\xi>0$ between the roots, and $\dot\xi<0$ otherwise.
A typical phase line is shown in Figure~\ref{GE figure}.

\begin{figure}[htp] \centering
\begin{tikzpicture}
  \draw[->] (-3,0) -- (3,0) node[below] {$\xi$};
  \draw (-1.5,-1.25) parabola bend (0,1) (1.5,-1.25) 
       node[right] {$\psi^\nu_\pm$};
  \fill (-1,0) circle (0.05) node[above left] {$\xi_1$};
  \fill (1,0) circle (0.05) node[above right] {$\xi_2$};
  \draw (-0.1,-0.1) -- (0.1,0) -- (-0.1,0.1);
  \draw (-1.9,-0.1) -- (-2.1,0) -- (-1.9,0.1);
  \draw (2.1,-0.1) -- (1.9,0) -- (2.1,0.1);
\end{tikzpicture}
\caption{Phase line for $\dot\xi = \psi^\nu_\pm(\xi)$.}
\label{GE figure}
\end{figure}
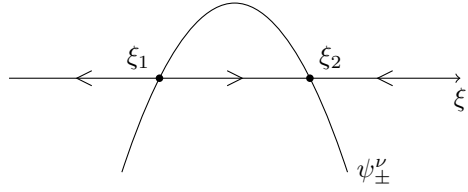

Whenever there are real roots, say $\xi_1\le\xi_2$, the region
$\{\xi>\xi_1\}$ is an invariant region for the ODE.  In particular, if
$a_0>0$, then the roots are of opposite signs and the region
$\{\xi>0\}$ is invariant.  Moreover, the region $\{\xi<\xi_1\}$ is
also invariant, and solutions that originate in this interval have
only a finite time of existence: it is this that drives the gowth and
blowup of gradients in the full system \eq{GE decouple3}, \eq{GE
  decouple4}.

For the $p$-system, we have $p_\mu =0$, so $a_0=a_1=0$ and $I=0$.  In
this case, $y=\sqrt{c}\,s_x$ and $q=\sqrt{c}\,r_x$ are (multiples of)
the gradients of the Riemann invariants.  By the above discussion, the
regions $\{ y>0\}$, $\{q>0\}$ are invariant domains for the system
\eq{ODEs psystem}, as are the regions $\{ y<0\}$, $\{q<0\}$.  Thus if
$y$ or $q$ is negative somewhere, the negative quadratic functions in
(\ref{ODEs psystem}) drive blowup of $y$ or $q$ in finite time, as
shown by Lax in \cite{lax2}.  From our point of view this is a trivial
case, for which we have uniform estimates.  In general, the forward
and backward waves interact nonlinearly with the varying stationary
background, and we do not expect uniformity.  In fact, interactions
can cause waves to change their \RC character, as demonstrated in
\cite{young blake 1} and the upcoming paper \cite{G4}.  We expect that
a complete analysis of the dynamics of \eq{GE new ode1}, \eq{GE new
  ode2} will yield a rich variety of new and unexpected phenomena.

We now prove the breakdown results of Theorem~\ref{GE Thm
  singularity2} by studying \eq{GE decouple3}, \eq{GE decouple4} as a
dynamical system.  Our aim is to describe sufficient conditions which
imply that the gradient blows up in finite time.

{\it Proof of Theorem \ref{GE Thm singularity2}}.\quad
Fix a constant $0<\nu\ll 1$, and define
\beq
  \psi^\nu_\pm(\xi) \equiv a_0 \pm a_1 \,\xi - (1-\nu)\,a_2 \,\xi^2,
\label{GE psi}
\eeq
so that our ODEs \eq{GE decouple3}, \eq{GE decouple4} can be written
\beq
   y^\prime = \psi^\nu_+(y) - \nu\,a_2\,y^2 \com{and}
   q^\bp =\psi^\nu_-(q) - \nu\,a_2\,q^2.
\label{GE psi eqs}
\eeq

Now let $N=N(\nu)<0$ be a uniform lower bound for the (real) roots of
$\psi^\nu_\pm$, or $N=0$ if there are no real roots.  Then since
$a_2>0$, we have 
\[
  \psi^\nu_\pm(\xi) \le 0 \com{for every}
  \xi \le N.
\]

Now suppose there is some $x_0$ such that the data satisfies
\[
  y_0 = y(x_0,0) < N.
\]
Then for the forward characteristic emanating from $(x_0,0)$, we have
\[
  y' \le - \nu\, a_2\,y^2,
\]
so that the solution satisfies
\beq 
   \frac{1}{y(t)}\ge{\frac{1}{y(0)}+\nu\int_{0}^t{a_2}\,dt},
\label{GE ysol}
\eeq
where the integral is taken along the forward characteristic.
Since $y(0)<0$ and $a_2$ is uniformly positive, there is some finite
$T_*$ such that the right hand side of \eq{GE ysol} vanishes, and so
we obtain
\[
  y(t) \to -\infty \com{as} t \to T_*.
\]
A similar calculation holds for $q$.

Finally, we calculate the lower bound $N$.  The roots of
$\psi^\nu_\pm$ solve the quadratic equations
\[
  \psi^\nu_\pm(\xi) = a_0 \pm a_1 \,\xi - (1-\nu)\,a_2 \,\xi^2 = 0
\]
and so are
\[
   \xi = \frac{\pm a_1 \pm \sqrt\Delta}{2\,(1-\nu)\,a_2},
\com{where}
   \Delta = a_1^2 + 4\,(1-\nu)\,a_0\,a_2.
\]
The minimum of these is clearly
\[
   -\frac{ |a_1| + \sqrt\Delta}{2\,(1-\nu)\,a_2},
\]
and the lower bound $N$ is obtained by maximizing the ratios
$|a_1|/a_2$ and $a_0/a_2$, so we look for upper bounds for $a_0$ and
$|a_1|$, and a lower bound for $a_2$.

From the expressions \eq{GE a0}--\eq{GE a2} and \eq{GE Idef}, we
obtain the bounds provided $c$ and $c_h$ are bounded away from zero,
while the quantities
\[
  h,\ c,\ c_h,\ p_\mu,\ p_{\mu h},\ p_{\mu hh},\com{and} p_{\mu\mu h},
\]
remain finite, and recalling \eq{GE p h}, the bound follows.

Using (\ref{GE xiMtaux}) and (\ref{GE xiZtaux}), the bounds can also
be expressed by the bounds on the derivatives of $p$ (and $c$) with
respect to $v$ and $\barx$.
$\square$\bigskip

When $a_0>0$, the equilibria are on opposite sides of the origin, so
the stable invariant region includes the values $\xi_1<\xi\le 0$.  If
the data can be set up in such a way that $y_0$ and $q_0$ lie in the
interval $(\xi_1,\infty)$ for all $x_0$, then the corresponding
solution would have finite gradients for all time, and these would be
nontrivial shock-free solutions.  This topic is part of the authors'
ongoing research.


\section{Generalized \RC structure}

The \RC structure at a single contact discontinuity or entropy jump
for the compressible Euler equation is fully analyzed in \cite{young
  blake 1}.  This is an analysis of how the \RC character changes when
a wave crosses a jump discontinuity, where the system is a $p$-system
on either side of the jump.  Using this \RC structure on an entropy
jump, a class of time-periodic solutions in compressible Euler
equations with polytropic ideal gas has been studied in a series of
papers \cite{young blake 1,young blake 2,young blake 3}. In this
section, we show that the \RC characters we have defined for smooth
solutions are consistent with those results.  Furthermore, by the
study of \RC structures in the generalized Euler equations (\ref{GE
  lagrangian1}), (\ref{GE lagrangian2}) with smooth and piecewise
smooth entropy fields, we expect eventually to see a large class of
time-periodic or quasi-periodic solutions as in \cite{young blake
  1,young blake 2} with both piecewise smooth and smooth entropy
profiles.

We first consider how the \RC character can change at a fixed point in
a smooth solution.

\begin{lem}
If $(\frac{p_{\barx}}{p_v})_v<0$ (or equivalently
$(\frac{p_\mu}{c})_h<0$), the backward \RC character can only change
from $C$ to $R$ (resp.~$R$ to $C$), if the solution is forward $C$
(resp.~$R$); the forward \RC character can only change from $C$ to $R$
(resp.~$C$ to $R$), if the solution is backward $R$ (resp.~$C$).  If
$(\frac{p_{\barx}}{p_v })_v>0$ (or $(\frac{p_\mu}{c})_h>0$), all the
above \RC character changes will only happen in the opposite direction.
\end{lem}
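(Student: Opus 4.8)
The plan is to read the sign-change conditions directly off the Riccati equations of Theorem~\ref{GE remark}, using the characterization of the \RC character from Corollary~\ref{GE def2}. By that corollary the backward character is governed by the sign of $\beta$ and the forward character by the sign of $\alpha$, so a change in the backward (forward) character at a point is exactly the event that $\beta$ (resp.~$\alpha$) vanishes there. Since $\beta$ evolves along backward characteristics via \eq{GE rem2} and $\alpha$ evolves along forward characteristics via \eq{GE rem1}, I would track each quantity along its own characteristic and examine the sign of its directional derivative at the instant it crosses zero.

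First I would treat the backward character. At a point where $\beta=0$ the quadratic term in \eq{GE rem2} satisfies $\frac{c_h}{2}(\alpha\beta-\beta^2)=\frac{c_h}{2}\beta(\alpha-\beta)=0$, so that equation collapses to
\[
  \beta^\backprime=\frac{c}{2}\Big(\frac{p_\mu}{c}\Big)_h\,\alpha .
\]
Because $c>0$, the sign of $\beta^\backprime$ at the crossing equals the sign of $(\frac{p_\mu}{c})_h\,\alpha$. A change from $C$ to $R$ means $\beta$ passes from negative to positive, i.e.\ $\beta^\backprime>0$, while a change from $R$ to $C$ means $\beta^\backprime<0$. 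Solving for the sign of $\alpha$ forced in each case, and recalling from Corollary~\ref{GE def2} that $\alpha<0$ (resp.~$\alpha>0$) is precisely forward $C$ (resp.~$R$), yields the stated backward conclusions when $(\frac{p_\mu}{c})_h<0$ and the reversed ones when $(\frac{p_\mu}{c})_h>0$.

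The forward character is symmetric. At a point where $\alpha=0$ the quadratic term in \eq{GE rem1} again drops out and that equation reduces to
\[
  \alpha^\prime=-\frac{c}{2}\Big(\frac{p_\mu}{c}\Big)_h\,\beta ,
\]
so the direction of the crossing is fixed by the sign of $-(\frac{p_\mu}{c})_h\,\beta$. Matching $\alpha^\prime>0$ (a $C\to R$ change) and $\alpha^\prime<0$ (an $R\to C$ change) against the sign of $\beta$, and using Corollary~\ref{GE def2} to read the sign of $\beta$ as the backward character, produces the forward statements. Finally, the equivalence of the two forms of the hypothesis, $(\frac{p_{\barx}}{p_v})_v\lessgtr0$ versus $(\frac{p_\mu}{c})_h\lessgtr0$, follows at once from identity \eq{GE RC function}, which gives $(\frac{p_\mu}{c})_h=\tfrac12(\frac{p_{\barx}}{p_v})_v$.

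I expect no genuine analytic obstacle: once one observes that the quadratic self-interaction term vanishes at the zero crossing, leaving only the cross-family term, the argument is pure sign bookkeeping. The single point requiring care is the correct pairing of the direction of each character change (the sign of $\beta^\backprime$ or of $\alpha^\prime$) with the sign of $(\frac{p_\mu}{c})_h$ and with the resulting sign of the \emph{opposite}-family variable, so that each of the four ``resp.'' cases is attached to the right wave family and the right direction of change.
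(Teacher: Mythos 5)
Your proposal is correct and follows essentially the same route as the paper: set $\beta=0$ (resp.\ $\alpha=0$) at the point of character change, note that \eq{GE rem2} (resp.\ \eq{GE rem1}) then reduces to the cross-family term $\frac{c}{2}(\frac{p_\mu}{c})_h\,\alpha$ (resp.\ $-\frac{c}{2}(\frac{p_\mu}{c})_h\,\beta$), and read off the sign of the opposite-family variable via Corollary~\ref{GE def2}, with the equivalence of the two hypotheses coming from \eq{GE RC function} (equivalently Corollary~\ref{GE cor change coordinate}). The sign bookkeeping in all four cases matches the statement.
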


\begin{proof}
Suppose the backward \RC character changes at $(x_0,t_0)$, then
$\beta=0$ at $(x_0,t_0)$.  Then by (\ref{GE rem2}), we have
\[
  \beta^\bp = \frac{c}{2}\,(\frac{p_\mu}{c})_h\;\alpha,
\]
which has the sign of $\alpha\,(\frac{p_\mu}{c})_h$.  Thus, the
backward wave changes from $C$ ($\beta<0$) to $R$ ($\beta>0$) at
$(x_0,t_0)$, if and only if $\alpha$ and $(\frac{p_\mu}{c})_h$ have
the same sign.  Thus if $(\frac{p_\mu}{c})_h<0$, then $\alpha<0$, and
the crossing forward wave is necessarily compressive by
Corollary~\ref{GE def2}.  All other cases follow similarly, and using
Corollary~\ref{GE cor change coordinate} for the equivalence of
derivative conditions completes the proof.
\end{proof}

In order to compare the \RC structure for smooth solutions to
that of a single entropy jump, we recall the relevant argument from 
\cite{young blake 1}.  Discontinuities in weak solutions are governed
by the Rankine-Hugoniot jump conditions, which are
\beq
  \sigma[v] = -[u],\com{and} 
  \sigma[u] = [p],
\label{GE RH}
\eeq
plus a third equation for energy conservation.  Here $\sigma$ is the
speed of the discontinuity, and brackets denote the jump $[f]=f_R-f_L$
in $f$ across the discontinuity.  An entropy jump has zero speed,
$\sigma=0$, so that \eq{GE RH} reduce to $[u]=0=[p]$, that is
\[
  u_R = u_L, \com{and}
  p_R = p_L.
\]
Across the entropy jump, the \RC structure changes as
follows, see \cite{young blake 1}: 

\begin{lem} 
For $\frac{c_R}{c_L}<1$, the backward \RC character can only change
from $R$ to $C$ (resp.~$C$ to $R$), if the solution is forward
$R$(resp.~$C$); the forward \RC character can only change from $C$ to
$R$ (resp.~$R$ to $C$), if the solution is backward
$R$(resp.~$C$). For $\frac{c_R}{c_L}>1$, all the above \RC character
changes will only happen in the opposite direction.
\end{lem}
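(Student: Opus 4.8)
The plan is to mirror the proof of the preceding (smooth) lemma, replacing the differential relation $\beta^\bp=\tfrac{c}{2}(\frac{p_\mu}{c})_h\,\alpha$ by an \emph{algebraic} relation across the jump coming from the Rankine--Hugoniot conditions. First I would record that, for a stationary entropy jump, \eq{GE RH} with $\sigma=0$ forces $[u]=[p]=0$ for all $t$. Since the jump sits at a fixed material coordinate $x=x_0$, I may differentiate these continuous boundary values tangentially (in $t$), obtaining $[u_t]=[p_t]=0$; combining $[u_t]=0$ with the momentum equation $u_t=-p_x$ then gives $[p_x]=0$ as well. Hence the quantities $P\equiv p_t$ and $Q\equiv p_x$ are continuous across the jump, whereas the wavespeed $c$ is not.

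Next I would express the \RC indicators on each side purely in terms of the continuous data $P,Q$ and the side-dependent wavespeeds $c_L,c_R$. Writing $\alpha=(cQ-P)/c^2$ and $\beta=-(P+cQ)/c^2$ and eliminating $P,Q$ yields the two linear matching relations
\begin{align*}
  \alpha_R&=\frac{c_L}{2c_R^2}\big[(c_L+c_R)\,\alpha_L+(c_L-c_R)\,\beta_L\big],\\
  \beta_R&=\frac{c_L}{2c_R^2}\big[(c_L-c_R)\,\alpha_L+(c_L+c_R)\,\beta_L\big].
\end{align*}
Here the prefactor $c_L/(2c_R^2)$ is positive, the self-family coefficient $c_L+c_R$ is positive, and the cross-family coefficient $c_L-c_R$ carries the sign of $1-c_R/c_L$.

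With these relations in hand the argument becomes a sign count that parallels the smooth case. A change in the backward \RC character across the jump means $\beta_L$ and $\beta_R$ have opposite signs; since $\beta_R$ is strictly increasing in $\beta_L$ (slope $c_L(c_L+c_R)/2c_R^2>0$), setting $\beta=0$ at the onset of the change shows that the \emph{direction} of the change is governed by the sign of $(c_L-c_R)\,\alpha_L$, i.e.\ by the forward character together with the sign of $1-c_R/c_L$. Recalling that the backward wave propagates from the $R$ side to the $L$ side (and the forward wave the other way), so that e.g.\ ``backward changes from $R$ to $C$'' means $\beta_R>0>\beta_L$, this reproduces for $c_R/c_L<1$ exactly the stated pairings, and for $c_R/c_L>1$ the reversed ones. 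The forward statements follow identically from the companion relation for $\alpha_R$. I would also verify that at the onset of a backward change ($\beta_L=0$) the forward indicators $\alpha_L,\alpha_R$ share a sign, so that ``forward $R$/$C$'' is unambiguous precisely at the transition.

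The algebra is routine once the matching relations are derived, so I expect the main obstacle to be bookkeeping rather than computation: correctly translating each of the four verbal transitions (two families, each changing in one of two directions) into inequalities using the propagation directions, and keeping them aligned with the sign of $c_L-c_R$. The only genuinely substantive point is the justification that the jump relations may be differentiated tangentially and that this yields continuity of \emph{both} $p_t$ and $p_x$; everything downstream is the sign count above.
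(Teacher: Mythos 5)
You should know at the outset that the paper itself does not prove this lemma: it is stated as a quotation from Temple--Young \cite{young blake 1}, so there is no internal proof to compare against, and your proposal is doing genuinely more work than the text. That said, your derivation is correct. The pivotal step --- differentiating $[u]=[p]=0$ tangentially along the stationary contact to get $[p_t]=0$, then using $u_t=-p_x$ from \eq{GE lagrangian2} to get $[p_x]=0$ --- is sound for solutions that are one-sided $C^1$ up to the jump, and I have checked your matching relations: eliminating $P=p_t$ and $Q=p_x$ from $\alpha=(cQ-P)/c^2$, $\beta=-(P+cQ)/c^2$ does give $\beta_R=\frac{c_L}{2c_R^2}\bigl[(c_L-c_R)\alpha_L+(c_L+c_R)\beta_L\bigr]$ and its companion. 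The sign count then reproduces all four pairings with the stated orientation conventions, and the cross-family coefficient $c_L-c_R$ plays exactly the role that $(\frac{p_\mu}{c})_h$ plays in the smooth lemma --- which is precisely the consistency the paper goes on to verify via \eq{GE RCequi}. Your relations are in effect the infinitesimal form of the transmission--reflection coefficients used in \cite{young blake 1}, so the route is morally the same as the cited one, just phrased in the $\alpha,\beta$ variables of this paper. Two small remarks: (i) ``at the onset of the change'' is a slight misnomer at a jump, since there is no continuous onset; the argument is cleaner run directly from $\beta_L\beta_R<0$, e.g.\ $\beta_R>0>\beta_L$ forces $(c_L-c_R)\alpha_L>-(c_L+c_R)\beta_L>0$. (ii) The ambiguity of ``the solution is forward $R$'' can be resolved in general, not only when $\beta_L=0$: if both characters flipped simultaneously the two matching relations would force $(c_L-c_R)^2>(c_L+c_R)^2$, which is impossible, so $\alpha_L$ and $\alpha_R$ share a sign whenever the backward character changes.
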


Comparing these Lemmas, we see that the condition $\frac{c_R}{c_L}<1$
for an entropy jump should be consistent with the condition
$(\frac{p_{\barx}}{p_v})_v<0$ for smooth solutions.  Recalling that
the change in pressure at an entropy jump is zero, we can assume
$p_x=0$, and treat the condition $\frac{c_R}{c_L}<1$ as $c_x<0$.
Thus it suffices to show that, if $p_x=0$, then
\beq
  (\frac{p_{\barx}}{p_v })_v <0\Leftrightarrow c_x<0.
\label{GE RCequi}
\eeq

Since $f_x = f_v\,v_x+f_{\barx}$, if $p_x=0$, we have
\[
   v_x = -\frac{p_{\barx}}{p_v},
\]
so that also
\[
  c_x = c_{v }v _x+c_{\barx} 
   = -c_{v}\frac{p_{\barx}}{p_v}+c_{\barx}
   = \frac{-c_{v }p_{\barx}+c_{\barx} p_v}{p_v}.
\]
On the other hand,
\[
  (\frac{p_{\barx}}{p_v})_v 
    = \frac{p_{\barx v }p_v - p_{\barx}p_{vv}}{(p_v)^2}
    = - \frac{2c(c_{\barx}p_v -p_{\barx}c_{v})}{(p_v )^2},
\]
where we recall $p_v = -c^2$.  Comparing these, it follows that
\[
  (\frac{p_{\barx}}{p_v})_v = - \frac{2c}{p_v}\;c_x = \frac2c\;c_x,
\]
which proves (\ref{GE RCequi}).  Thus our continuous \RC character is
consistent with that of a single entropy jump.

\section{Transverse flow of 1D MHD}

In this section, we consider the motion of a compressible fluid
coupled a magnetic field $H=(H_1, H_2, H_3)$, which satisfy the
Magnetohydrodynamic (MHD) equations in Lagrangian coordinates. In
one space dimension, the MHD equations can be written in the
non-conservative form
\begin{align} 
\frac{\partial v }{\partial t}-\frac{\partial u_1}
{\partial x} &= 0\label{GE mhd1},
\\\frac{\partial H_2}{\partial t}+\rho H_2 \frac{\partial u_1}
{\partial x}-
\rho H_1 \frac{\partial u_2}{\partial x} &= 0,\label{GE mhd2}\\
\frac{\partial H_3}{\partial t}+\rho H_3 \frac{\partial
u_1}{\partial x}- \rho H_1 \frac{\partial u_3}{\partial x} &= 0,
\label{GE mhd3}\\
\frac{\partial u_1}{\partial t}+\frac{\partial }{\partial
x}(p+\frac{1}{2}\mu_0(H^2_2+H^2_3)) &= 0,\label{GE
mhd4}\\\frac{\partial
u_2}{\partial t}-\mu_0 H_1\frac{\partial H_2}{\partial x} &= 0,
\label{GE mhd5}\\
\frac{\partial u_3}{\partial t}-\mu_0 H_1\frac{\partial
H_3}{\partial x} &= 0,\label{GE mhd6}\\ \frac{\partial S}{\partial
t} &= 0,\label{GE mhd7}
\end{align}
where  $\rho$ is the density, $v =\rho^{-1}$ is the specific volume,
$(u_1,u_2,u_3)$ is the velocity field, $S$ is the entropy, and
$p=p(v ,S)$ is the pressure.  The permeability $\mu_0$ is a positive
constant, and $H_1$ is constant in the one-dimesional model,
c.f.~\cite{Li Qin,Rammaha}.

The mathematical structures of (\ref{GE mhd1})--(\ref{GE mhd7})
are totally different when $H_1$ is zero or a nonzero constant.  Here
we briefly consider the easier case $H_1=0$, which physically means
that the magnetic field is transverse to the direction of motion.  In this
case, (\ref{GE mhd1})--(\ref{GE mhd4}) and (\ref{GE mhd7}) form
a closed system~\cite{Li Qin}:
\begin{align}
\frac{\partial v }{\partial t}-\frac{\partial u_1}
{\partial x} &= 0\nn,\\\frac{\partial u_1}{\partial
t}+\frac{\partial \tilde{p}}{\partial x} &= 0,\nn\\
\frac{\partial \tilde{H_2}}{\partial t} &= 0,\label{MHDsmall}\\
\frac{\partial \tilde{H_3}}{\partial t} &= 0,\nn\\
\frac{\partial S}{\partial t} &= 0,\nn
\end{align}
where
\[
  \tilde{H_2}=v  H_2,\quad
  \tilde{H_3}=v  H_3,\quad
  \tilde{p}=p+\frac{1}{2}\mu_0
     \frac{{\tilde{H}}^2_2+{\tilde{H}}^2_3}{v^2}.
\]
Thus in the special case of transverse flow, the one-dimensional MHD
equations fit into the framework of (\ref{GE lagrangian1}), (\ref{GE
lagrangian2}), and our previous results apply directly.

We analyze this further for a polytropic ideal ($\gamma$-law) gas, so
that the pressure is given by
\[
   p=K\,e^{{S}/{c_v}}\,v ^{-\gamma},
\]
with adiabatic gas constant $\gamma>1$, and $K$, $c_v$ are positive
constants.  Then we get
\begin{align*}
  \tilde{p} &= K\,e^{{S}/{c_v}}\,v^{-\gamma}+\frac{1}{2}\mu_0
     \frac{{\tilde{H}}^2_2+{\tilde{H}}^2_3}{v^2} \\
  &= A_1\,v^{-\gamma} + A_2\, v^{-2},
\end{align*}
where we have set
\[
  A_1(x) \equiv Ke^{{S(x)}/{c_v }} \com{and}
  A_2(x) \equiv \frac{1}{2}\mu_0 (\tilde{H}^2_2(x)+{\tilde{H}}^2_3(x)).
\]

In order to further simplify our calculation, we now assume
$\gamma=2$, and set
\[
   B(x)\equiv A_1(x)+A_2(x), \com{so that}
   \tilde p(v,x) = B(x)\,v^{-2}.
\]
We then calculate
\[
  c(v,x) = \sqrt{-p_v} = \sqrt{2}\,\sqrt{B(x)}\,v^{-3/2},
\]
and, from \eq{hmudef},
\[
  h(v,x) = \int_v^\infty c\;dv 
         = 2\sqrt2\,\sqrt{B(x)}\,v^{-1/2}.
\]
Now solving for $v$ and recalling $\mu = x$, we obtain
\[
  v(h,\mu) = 8\,B(\mu)\,h^{-2},
\]
and so also
\[
  p(h,\mu) = \frac{h^4}{64\,B(\mu)}  \com{and}
  c(h,\mu) = \frac{h^3}{16\,B(\mu)}.
\]
Note that
\[
  \frac{\partial p}{\partial h}(h,\mu) = c(h,\mu) \com{and}
  \frac{\partial v}{\partial h}(h,\mu) = \frac{-1}{c(h,\mu)},
\]
as intended by the choice of $h$, see~\cite{young Global
  interaction}.  Next, we calculate
\[
  \frac{p_\mu}{c} = -\frac{h}{4}\,\frac{\dot B(\mu)}{B(\mu)},
\]
where $\dot B \equiv \frac{dB}{d\mu}$, so that
\[
  \alpha = u_x + h_x - \frac{h}{4}\,\frac{\dot B(x)}{B(x)} \com{and}
  \beta = u_x - h_x + \frac{h}{4}\,\frac{\dot B(x)}{B(x)};
\]
here $\alpha$ and $\beta$ are evaluated at the point $(x,t)$, and $\mu=x$.
From \eq{GE Idef}, we calculate
\beq
  I(h,\mu) = \frac{-1}{80}\,\frac{\dot B(\mu)}{B(\mu)^{3/2}}\,h^{5/2},
\label{I}
\eeq
which also leads to
\[
  \frac{p_\mu}{\sqrt c} - I 
   = \frac{-1}{20}\,\frac{\dot B(\mu)}{B(\mu)^{3/2}}\,h^{5/2},
\]
and thus, by \eq{GE y def}, \eq{GE q def},
\[
  y = \frac{h^{3/2}}{4\sqrt B}\;
     \left(u_x + h_x - \frac{h}{5}\,\frac{\dot B}{B}\right),
\com{and}
  q = \frac{h^{3/2}}{4\sqrt B}\;
     \left(u_x - h_x + \frac{h}{5}\,\frac{\dot B}{B}\right).
\]
Finally, using \eq{GE a0}--\eq{GE a2}, we calculate the coefficients
\[
  a_2 = \frac38\,\frac{h^{1/2}}{\sqrt B}, \com{and}
  a_1 = \frac{1}{40}\,\frac{\dot B}{B^2}\,h^3 
      = -2\,\frac{h^{1/2}}{\sqrt B}\,I,
\]
where we have used \eq{I}, and, after simplification,
\begin{align*}
  a_0 &= h^{11/2}\;\left[ \frac1{2^8\,5}\,\frac1B\,
      \Big(\frac{\dot B}{B^{3/2}}\Big)\dot{\vphantom{\Big|}}
      +\frac{\dot B^2}{B^{7/2}}\,\Big(
        \frac1{2^{11}}-\frac1{2^{10}\,5}-\frac3{2^{11}\,5^2}
      \Big)\right]\\
     &= \frac{h^{11/2}}{2^8\,5}\;\left[
         \frac{\ddot B}{B^{5/2}} - \frac65\frac{\dot
           B^2}{B^{7/2}}\right]
       = - 6\,\frac{h^{1/2}}{\sqrt B}\,G(\mu)\,I^2,
\end{align*}
where we have set
\[
   G(\mu) \equiv 1 - \frac56\,\frac{B(\mu)\;\ddot B(\mu)}{\dot
     B(\mu)^2},
\]
wherever $\dot B(\mu) \ne 0$.

We use these in \eq{GE xi} to calculate the quadratic
\[
  \psi_\pm(\xi) = - \frac{h^{1/2}}{\sqrt B}\;\left(
       6\,G\,I^2 \pm 2\,I\,\xi + \frac38\,\xi^2\right),
\]
and the corresponding dynamical system can be analyzed as above.  It
is clear that the growth of the quantities $y$ and/or $q$ depends
critically on their size relative to $I$.

\section{Euler flow in a variable area duct}

Finally, we consider the compressible Euler flow through a duct of
varying cross section $a(\tilx)$. In (spatial) Eulerian
coordinates $(\tilx,\tilt)$, this flow satisfies:
\begin{align}
  a_{\tilt}&=0,\label{duct1}\\
  (a\rho)_{\tilt}+(a\rho u)_{\tilx}&=0,\label{duct2}\\
  (a\rho u)_{\tilt}+(a\rho u^2)_{\tilx}+a p_{\tilx}
       &=0,\label{duct3}\\
  (a\rho E)_{\tilt}+(a\rho E u+apu)_{\tilx}=0,\label{duct4}
\end{align}
where $E$ is the energy, $\rho$ is the density, $u$ is the velocity,
$p$ is the pressure and $a=a(\tilx)$ is the diameter of the duct
at position $\tilx$, \cite{courant, Dafermos, Hong Temple}. For
smooth solutions, the energy equation (\ref{duct4}) is equivalent to
\beq
   S_{\tilt}+u S_{\tilx}=0,
\label{ductS}
\eeq
where $S$ is the entropy \cite{courant}. By introducing Lagrangian
coordinates $(x,t)$, which satisfy
\begin{align}
  dx &= a\rho\, d\tilx- a\rho u\, d \tilt,
\label{duct transform1}\\
  dt &= d\tilt,
\label{duct transform2}
\end{align}
it is easy to check the smooth solutions of
(\ref{duct1})--(\ref{duct3}) and (\ref{ductS}) satisfy
\begin{align}
  \hat{{v}}_t-u_x&=0,\label{duct l2}\\
  u_t+a p_x&=0,\label{duct l3}\\
  S_t&=0, \label{duct l4}
\end{align}
where $\hat v$ is the specific volume per unit cross-sectional area,
\[
  \hat{{v}}=\frac{1}{a\rho}.
\]
By  by (\ref{duct2}), the right hand side of (\ref{duct transform1})
is an exact differential, so there is no difficulty defining $x$.  By
(\ref{duct transform1}), (\ref{duct transform2}), $a(x,t)$ satisfies
the identities
\beq
   a_t=u\,\dot{a},\quad
   a_x=\hat{{v}}\,\dot{a},\quad
   (\dot{a})_t=u\,\ddot{a},\com{and}
   (\dot{a})_x=\hat{{v}}\,\ddot{a},
\label{duct l1}
\eeq
where we denote
\[
   \dot{a}=\frac{d a(\tilx)}{d\tilx},\com{and}
   \ddot{a}=\frac{d^2 a(\tilx)}{d{\tilx}^2},
\]
these describing the changing shape of the duct in spatial coordinates.

We again assume we have an ideal polytropic gas, so that
\[
   p = K\,e^{S/c_v}\,v^{-\gamma}
     = K\,e^{S/c_v}\,(a\,\hat{{v}})^{-\gamma}.
\]
with adiabatic gas constant $\gamma>1$, c.f.~\cite{courant}.  The
(Lagrangian) sound speed is
\[
  c = \sqrt{-a\,p_{\hat{{v}}}}
    = \sqrt{K\gamma}\,a^{-\frac{\gamma-1}{2}}\;
      {\hat{{v}}}^{-\frac{\gamma+1}{2}}\,e^{\frac{S}{2c_v}}.
\]

Following \cite{young blake 1}, we make the change of variables
\beq
   m = e^{S/2c_v}   \com{and}
   z = \int^\infty_{\hat v}
       \frac{c}{a^{-\frac{\gamma-1}2}\,m}\;d\hat v
     = \frac{2\sqrt{K\gamma}}{\gamma-1}\,
       \hat v^{-\frac{\gamma-1}{2}}.
\label{z def}
\eeq
It follows that
\begin{align*}
   \hat{{v}} &= K_{{v}}z^{-\frac{2}{\gamma-1}}, \\
   p &= K_p a^{-\gamma}m^2 z^{\frac{2\gamma}{\gamma-1}}, 
\com{and}\\
   c &= c(z,m)=K_c a^{-\frac{\gamma-1}{2}}m
   z^{\frac{\gamma+1}{\gamma-1}},
\end{align*}
where $K_{v}$, $K_p$ and $K_c$ are appropriate positive constants,
see~\cite{young blake 1}.

For $C^1$ solutions, the {Lagrangian} equations (\ref{duct
  l2})--(\ref{duct l4}) are equivalent to
\begin{align}
   z_t+\frac{c}{a^{-\frac{\gamma-1}{2}}m}u_x&=0,
\label{duct lagrangian1 zm}\\
   u_t+mca^{-\frac{\gamma-1}{2}}z_x+2\frac{ap}{m}m_x-\gamma p a_x&=0,
\label{duct lagrangian2 zm}\\\label{duct lagrangian3 zm}
   m_t&=0.
\end{align}

We define
\begin{align*}
   \alpha &= u_x+a^{-\frac{\gamma-1}{2}}mz_x+\frac{\gamma-1}{\gamma}
       a^{-\frac{\gamma-1}{2}}m_x z \com{and}\\
   \beta &= u_x-a^{-\frac{\gamma-1}{2}}mz_x-
       \frac{\gamma-1}{\gamma} a^{-\frac{\gamma-1}{2}}m_x z,
\end{align*}
which are the gradient variables used for the polytropic ideal Euler
flow in \cite{G3}, adjusted to account for $a(\tilx)$.  By
similar calculations as in the proof of Theorem \ref{GE remark} or in
\cite{G3}, we obtain
\begin{align}
  \alpha^\prime&=k_1(k_2(3\alpha+\beta)+(\alpha\beta-\alpha^2))+k_3
   (\alpha-\beta)+A(x,t),
\label{duct alpha}\\
  \beta^\bp &=-k_1(k_2(3\beta+\alpha)+(\alpha\beta-\beta^2))+k_3
   (\beta-\alpha)-A(x,t),
\label{duct beta}
\end{align}
where
\[
  A(x,t) = \frac{(\gamma-1)^3}{8K_c}m^2z^{\frac{2\gamma-4}{\gamma-1}}
a^{-\gamma-1}[a \ddot{a}-\gamma
\dot{a}^2]+\frac{(\gamma-1)^2}{2\gamma}m
 m_x z^2 a^{-\gamma}\dot{a},
\]
and the coefficients are
\begin{align*}
  k_1 &= \frac{\gamma+1}{2(\gamma-1)}K_c z^{\frac{2}{\gamma-1}},\\
  k_2 &= \frac{\gamma-1}{\gamma{\gamma+1}}m_x z
     a^{-\frac{\gamma-1}{2}}, \com{and}\\
  k_3 &= \frac{3(\gamma-1)^3}{8}m z  a^{-\frac{\gamma+1}{2}}
     \dot{a}-\frac{\gamma-1}{4}u a^{-1}\dot{a}.
\end{align*}
By (\ref{duct lagrangian1 zm})--(\ref{duct lagrangian3 zm}),
\[
  z^{\prime} = z_t+c\,z_x = -K_c
   z^{\frac{\gamma+1}{\gamma-1}}(\beta+\frac{\gamma-1}{\gamma}m_x z
   a^{-\frac{\gamma-1}{2}}),
\]
so that
\beq
   \beta= -\frac{z^{\prime}}
   {K_c z^{\frac{\gamma+1}{\gamma-1}}}-\frac{\gamma-1}{\gamma}m_x z
   a^{-\frac{\gamma-1}{2}}.
\label{dbeta z prime}
\eeq

We plug (\ref{dbeta z prime}) into (\ref{duct alpha}), move
terms including $z^\prime$ to the left hand side, and multiply
by $z^{\frac{\gamma+1}{2(\gamma-1)}}$ on both sides, to obtain
\begin{align}
(z^{\frac{\gamma+1}{2(\gamma-1)}}&\alpha)^{\prime}+
\frac{m_x}{2\gamma}a^{-\frac{\gamma-1}{2}}z^{\frac{\gamma+1}
{2(\gamma-1)}}
z^{\prime}-\frac{1 }
{K_c }k_3 z^{-\frac{\gamma+1}{2(\gamma-1)}}z^{\prime}\nn\\
&=k_1(k_2(3\alpha-{\textstyle\frac{\gamma-1}{\gamma}}m_x z
a^{-\frac{\gamma-1}{2}})+(-{\textstyle\frac{\gamma-1}{\gamma}}m_x z
a^{-\frac{\gamma-1}{2}}\alpha-\alpha^2))z^{\frac{\gamma+1}
{2(\gamma-1)}}
\nn\\&\quad{}+k_3(\alpha+{\textstyle\frac{\gamma-1}{\gamma}}m_x
z a^{-\frac{\gamma-1}{2}})
z^{\frac{\gamma+1}{2(\gamma-1)}}+A(x,t)z^{\frac{\gamma+1}
{2(\gamma-1)}}.
\label{duct y eqn}
\end{align}
Furthermore, by (\ref{duct lagrangian1 zm})--(\ref{duct lagrangian3
  zm}), we have 
\begin{align}
   u^{\prime} &= u_t+c u_x = c\beta+\gamma p a_x\nn\\ 
    &= -m a^{-\frac{\gamma-1}{2}}z^\prime-\frac{\gamma-1}{\gamma}c
    z m_x a^{-\frac{\gamma-1}{2}}+\gamma p a_x.\label{duct u prime}
\end{align}
By (\ref{duct u prime}) and following the proof of Theorem \ref{GE
them new ODEs}, (\ref{duct y eqn}) can again be transformed into
\[
   y^\prime= \psi_+(y) = a_0+a_1 y-a_2 y^2,
\]
where $y$ is a gradient variable, and $a_0$, $a_1$ are functions of
$m$, $m_x$, $m_{xx}$, $z$, $a$, $\dot{a}$, $\ddot{a}$, and $u$, and 
\[
  a_2 = \frac{\gamma+1}{2(\gamma-1)}K_c
  z^{\frac{\gamma+1}{2(\gamma-1)}-1} > 0.
\]
Similar considerations for backward waves yield a similar equation for
the gradient variable $q$, namely
\[
   q^\bp = \psi_-(q) = a_0-a_1\, q-a_2\, q^2.
\]
A blowup result similar to Theorem~\ref{GE Thm singularity2} can again
be obtained for this system.  We omit the details since the
calculations are tedious but similar to our previous calculation, and
$a_0$ and $a_1$ are more complicated expressions.  Note in particular
that they depend on $u$ as well as $z$, which is a new feature of this
case.

\end{document}